\newcommand{\R}{\mathbb{R}}
\newcommand{\C}{\mathbb{C}}
\newcommand{\ep}{\varepsilon}
\newcommand{\pa}{\partial}
\DeclareMathOperator{\lifespan}{LifeSpan}
\newcommand{\lr}[1]{{}\langle{}#1{}\rangle{}}
\newtheorem{theorem}{Theorem}[section]
\newtheorem{lemma}[theorem]{Lemma}
\newtheorem{proposition}[theorem]{Proposition}
\theoremstyle{remark}
\newtheorem{remark}{Remark}[section]
\theoremstyle{definition}
\newtheorem{definition}{Definition}[section]
\numberwithin{equation}{section}
\def\@cite#1#2{[{{\bfseries #1}\if@tempswa , #2\fi}]}
\begin{document}
\begin{center}
\Large{{\bf
Upper bound for lifespan 
of solutions to 
certain semilinear parabolic, 
dispersive and hyperbolic 
equations  
via 
a unified test function method 
}}
\end{center}

\vspace{5pt}

\begin{center}
Masahiro Ikeda%
\footnote{
Department of Mathematics, Faculty of Science and Technology, Keio University, 3-14-1 Hiyoshi, Kohoku-ku, Yokohama, 223-8522, Japan/Center for Advanced Intelligence Project, RIKEN, Japan, 
E-mail:\ {\tt masahiro.ikeda@keio.jp/masahiro.ikeda@riken.jp}}
and
Motohiro Sobajima%
\footnote{
Department of Mathematics, 
Faculty of Science and Technology, Tokyo University of Science,  
2641 Yamazaki, Noda-shi, Chiba, 278-8510, Japan,  
E-mail:\ {\tt msobajima1984@gmail.com}}
\end{center}

\newenvironment{summary}{\vspace{.5\baselineskip}\begin{list}{}{%
     \setlength{\baselineskip}{0.85\baselineskip}
     \setlength{\topsep}{0pt}
     \setlength{\leftmargin}{12mm}
     \setlength{\rightmargin}{12mm}
     \setlength{\listparindent}{0mm}
     \setlength{\itemindent}{\listparindent}
     \setlength{\parsep}{0pt}
     \item\relax}}{\end{list}\vspace{.5\baselineskip}}
\begin{summary}
{\footnotesize {\bf Abstract.}
This paper is concerned with the blowup phenomena for 
initial-boundary  value problem 
\begin{equation}\label{our}
\begin{cases}
\tau \pa_t^2 u(x,t)-\Delta u(x,t)+a(x)\pa_t u(x,t)=\lambda |u(x,t)|^p, 
& (x,t)\in \mathcal{C}_\Sigma \times (0,T),
\\
u(x,t)=0, 
& (x,t)\in \pa\mathcal{C}_\Sigma \times (0,T),
\\
u(x,0)=\ep f(x), 
& x\in \mathcal{C}_\Sigma,
\\
\tau \pa_t u(x,0)=\tau \ep g(x), 
& x\in \mathcal{C}_\Sigma,
\end{cases}
\end{equation}
where $\mathcal{C}_\Sigma$ is a cone-like domain in $\R^N$ ($N\geq 2$) 
defined as 
$
\mathcal{C}_\Sigma
=
{\rm int}\left\{r\omega\in \R^N\;;\;r\geq  0, \omega\in \Sigma \right\}
$
with a connected open set $\Sigma$ in $S^{N-1}$ with smooth boundary $\pa\Sigma$. 
If $N=1$, then we only consider two cases $\mathcal{C}_\Sigma=(0,\infty)$ 
and $\mathcal{C}_\Sigma=\R$. 
Here $a(x)$ is a non-zero coefficient of $\pa_tu$ which could be complex-valued and space-dependent, 
$\lambda\in \C$ is a fixed constant, and $\ep>0$ is a small parameter. The constants $\tau=0,1$ switch the parabolicity and 
hyperbolicity of the problem \eqref{our}. 
The result proposes a {\it unified treatment} of estimates for lifespan of 
solutions to \eqref{our} by test function method. 
The Fujita exponent $p=1+2/N$ appears as a threshold of 
blowup phenomena for small data
when $\mathcal{C}_{\Sigma}=\R^N$, but the case of cone-like domain with boundary 
the threshold changes and explicitly given via the first eigenvalue 
of corresponding Laplace--Beltrami operator with Dirichlet boundary condition
as in Levine--Meier \cite{LM89}.
}
\end{summary}

{\footnotesize{\it Mathematics Subject Classification}\/ (2010): %
35K58, 
35L71, 
35Q55, 
35Q56. 
}

{\footnotesize{\it Key words and phrases}\/: %
Did you choose to submit a Data in Brief alongside your research article? If yes, please bundle all the Data in Brief files (completed word document and any relevant data files) in a single .zip file.
}

\section{Introduction}
In this paper, we discuss the blow-up phenomena 
of the following initial-boundary value problem 
\begin{equation}\label{ndw}
\begin{cases}
\tau \pa_t^2 u(x,t)-\Delta u(x,t)+a(x)\pa_t u(x,t)=\lambda |u(x,t)|^p, 
& (x,t)\in \mathcal{C}_\Sigma \times (0,T),
\\
u(x,t)=0, 
& (x,t)\in \pa\mathcal{C}_\Sigma \times (0,T),
\\
u(x,0)=\ep f(x), 
& x\in \mathcal{C}_\Sigma,
\\
\tau \pa_t u(x,0)=\tau \ep g(x), 
& x\in \mathcal{C}_\Sigma,
\end{cases}
\end{equation}
where $\mathcal{C}_\Sigma$ is a cone-like domain in $\R^N$ ($N\geq 2$) 
defined as 
\[
\mathcal{C}_\Sigma
:=
{\rm int}\left\{\rho \omega\in \R^N\;;\;\rho \geq 0, \omega\in \Sigma \right\}
\]
with a connected open set $\Sigma$ in $S^{N-1}$ with smooth boundary $\pa\Sigma$. 
If $N=1$, then we only consider two cases $\mathcal{C}_\Sigma=(0,\infty)$ 
and $\mathcal{C}_\Sigma=\R$. 
Here $a(x)$ is a non-zero coefficient of $\pa_tu$ satisfying 
\begin{align}\label{ass.a(x)}
|a(x)|\leq a_0\lr{x}^{-\alpha}
\end{align}
with $\alpha\in [0,1]$;
note that $a(x)$ could be complex-valued and space-dependent, 
$\lambda\in \C$ is a fixed constant, and $\ep>0$ is a small parameter. 
The initial data $(f,\tau g)$ at least belongs to the following class: 
\[
(f,\tau g)\in H^1_0(\mathcal{C}_{\Sigma})\cap L^2(\mathcal{C}_{\Sigma}), 
\quad 
(\tau g+af)|x|^{\gamma}\in L^1(\mathcal{C}_{\Sigma})
\]
for some $\gamma\geq 0$. 
Finally, the constants $\tau=0,1$ switch the parabolicity and 
hyperbolicity of the problem \eqref{ndw}. 

The problem \eqref{ndw} is 
a unified form of several partial differential equations
of parabolic, dispersive and hyperbolic type. For example, 
if $\tau=0$, $a(x)\equiv 1$, 
$\mathcal{C}_\Sigma=\R^N$ (that is, $\Sigma=S^{N-1}$),
and $\lambda=1$ with $f\geq 0$, 
then \eqref{ndw} becomes the usual nonlinear heat equation 
of Fujita type:
\begin{equation}\label{nheat}
\begin{cases}
\pa_t u(x,t)-\Delta u(x,t)=u(x,t)^p, 
& (x,t)\in \R^N \times (0,T),
\\
u(x,0)=\ep f(x), 
& x\in \R^N.
\end{cases}
\end{equation}
If $\tau=0$, $a(x)\equiv -i$, $\lambda\in \C\setminus\{0\}$ 
and 
$\mathcal{C}_\Sigma=\R^N$,
then \eqref{ndw} becomes 
the nonlinear Schr\"odinger equation 
without gauge invariance:
\begin{equation}\label{schrodinger}
\begin{cases}
i\pa_t u(x,t)+\Delta u(x,t)=-\lambda |u(x,t)|^p, 
& (x,t)\in \R^N \times (0,T),
\\
u(x,0)=\ep f(x), 
& x\in \R^N,
\end{cases}
\end{equation}
If $\tau=0$, $a(x)\equiv e^{-i\zeta}$, $\lambda=e^{i(\eta-\zeta)}$ 
and $\mathcal{C}_\Sigma=\R^N$,
then \eqref{ndw} becomes 
the complex Ginzburg--Landau equation without gauge invariance:
\begin{equation}\label{CGL}
\begin{cases}
\pa_t u(x,t)-e^{i\zeta}\Delta u(x,t)=e^{i \eta} |u(x,t)|^p, 
& (x,t)\in \R^N \times (0,T),
\\
u(x,0)=\ep f(x), 
& x\in \R^N,
\end{cases}
\end{equation}
Finally, if $\tau=1$, $a(x)> 0$, $\lambda=1$ 
and $\mathcal{C}_\Sigma=\R^N$, then \eqref{ndw} becomes 
the nonlinear wave equation with space-dependent damping 
\begin{equation}\label{dampedwave}
\begin{cases}
\pa_t^2 u(x,t)-\Delta u(x,t)+a(x)\pa_t u(x,t)=|u(x,t)|^p, 
& (x,t)\in \R^N \times (0,T),
\\
u(x,0)=\ep f(x), 
& x\in \R^N,
\\
\pa_t u(x,0)=\ep g(x), 
& x\in \R^N.
\end{cases}
\end{equation}
Moreover, we can treat 
halved space 
$\mathcal{C}_\Sigma=\R_+^{k}\times \R^{N-k}$ when 
we take 
$\Sigma=\{(\omega_j)\in S^{N-1};\omega_l>0\ (l=1,\ldots,k)\}$. 
Therefore the corn-like domain $C_{\Sigma}$ is a generalization 
of domains with scale-invariance (see also Levine--Meier \cite{LM89}). 

The study of blowup phenomena 
for solutions to the respective equations has a long history. 
For the semilinear heat equation \eqref{nheat}, the blowup solutions 
were found in Fujita \cite{Fujita66} when $p<1+\frac{2}{N}$;
the exponent $p_F=1+2/N$ is well-known as the ``Fujita exponent". 
Then in the critical case $p=p_F$
blowup phenomena were shown 
by Hayakawa \cite{Hayakawa73}, 
Sugitani \cite{Sugitani75} (including fractional Laplacian) and 
Kobayashi--Shirao--Tanaka \cite{KST77}.
The sharp upper and lower estimates for lifespan of solutions to \eqref{nheat}
was established in Lee--Ni \cite{LN92} 
by using the structure of the heat kernel and the maximum principle as 
\[
\lifespan(u)\sim 
\begin{cases}
\ep^{-(\frac{1}{p-1}-\frac{N}{2})^{-1}}, & \text{if}\ p<1+\frac{2}{N}, 
\\
\exp(C\ep^{-(p-1)}), & \text{if}\ p=1+\frac{2}{N}.
\end{cases}
\]
Later, the further profile of blowup solutions 
including sign-changing solutions 
are considered by many mathematicians 
(see, e.g., Mizoguchi--Yanagida \cite{MY96,MY98}, Fujishima--Ishige \cite{FI11,FI12}
and their references therein).  

For the semilinear Schr\"odinger equation without gauge invariance \eqref{schrodinger}, 
blowup phenomena are discovered by Ikeda--Wakasugi \cite{IW13} when $p\leq p_F=1+2/N$.
Later the estimates of lifespan of solutions to \eqref{schrodinger} 
was found in Fujiwara--Ozawa when $p<p_F$. 
The similar analysis in view of stochastic aspect can be found in 
Oh--Okamoto--Pocovnicu \cite{OOParxiv}. We have to remark that 
the estimates of lifespan in the critical case $p=p_F$ left as an 
open problem in $L^1$-initial data. 

For the complex Ginburg--Landau equation without gauge invariance \eqref{CGL}, 
blowup and lifespan of solutions to \eqref{CGL} 
in one-dimensional torus is studied by 
Ozawa--Yamazaki \cite{OY03}. Of course 
the complex Ginburg--Landau equation with nonlinear term 
$(\kappa+i\beta)|u|^{p-1}u$ (with gauge invariance) 
has been considered 
(see 
for existence, e.g., 
Ginible--Velo \cite{GV96}, 
Okazawa--Yokota \cite{OY02}, 
and 
for blowup phenomena, e.g., 
Masmoudi--Zaag \cite{MZ08}, 
Cazenave--Dickstein--Weissler \cite{CDW13} 
and 
Cazenave--Correia--Dickstein--Weissler \cite{CCDW15}). 


For the nonlinear damped wave equation without gauge invariance \eqref{dampedwave}, 
the blowup phenomena and estimates of the lifespan of solutions to 
\eqref{dampedwave} have been intensively studied for almost 20 years. 
First result should be Li--Zhou \cite{LZ95} and they proved 
blowup and upper bound of lifespan of solutions of \eqref{dampedwave} 
for $1<p\leq  p_F$ when $a(x)=1$ and $N=1,2$. 
Then the same question for the case of $N=3$ is answered by 
Nishihara \cite{Nishihara03Ibaraki}. 
For general, but subcritical case $1<p<p_F$ with $a(x)=1$, 
Todorova--Yordanov \cite{TY01} established blowup phenomena 
of \eqref{dampedwave} for arbitrary dimensions. 
In  the critical case $p=p_F$ for general dimensions 
Zhang \cite{Zhang01} obtained the same conclusion. 
Then the interest goes to the case of 
time-dependent or space-dependent damping. 
For time dependent case, we refer the study of 
Lin--Nishihara--Zhai \cite{LNZ12}, 
Ikeda--Wakasugi \cite{IW15} and Ikeda--Ogawa \cite{IO16} and the reference therein.
In the case of space-dependent damping, 
Ikehata--Todorova--Yordanov \cite{ITY09} found that 
the threshold for global existence of global solutions 
with small initial data and 
blowup for arbitrary small initial data 
for \eqref{dampedwave} with $a(x)\sim \lr{x}^{-\alpha}$ and $\alpha\in [0,1)$. 
We point out that the threshold shifts 
from the Fujita exponent $p_F$ to $p_F(\alpha)=1+\frac{2}{N-\alpha}$. 
Very recently, Lai--Zhou \cite{LZarxiv} succeeded in proving 
the sharp estimate of lifespan of solutions to \eqref{dampedwave} 
when $a(x)=1$ and $p=p_F$ by 
applying the consideration in \cite{LN92}. 

The similar study of respective problems for halved space 
$\R^{k}_+\times \R^{N-k}$ has been done separately 
in literature 
(see e.g., 
Meier \cite{Meier88,Meier90},
Levine--Meier \cite{LM89,LM90} 
and 
Ikehata \cite{Ikehata03,Ikehata03-2,Ikehata04}). 
In particular, Levine--Meier \cite{LM89,LM90} considered 
the nonlinear heat equation in $\mathcal{C}_{\Sigma}$ by using 
the explicit representation for heat kernel on the cone-like domain 
and found the corresponding threshold for blowup phenomena.

We would summarize the situation of study of blowup phenomena that 
although the detailed analysis has been done for respective equations 
in the respective cases, but many open problems are posed 
separately. 

The purpose of the present paper is 
to give a unified treatment for proving the upper bound of lifespan of solutions 
by using test function method
to the general problem \eqref{ndw} in cone-like domain 
including the all respective critical situations for respective equations. 
The crucial idea is mainly in the proof of Lemma \ref{key} (see also Remark \ref{rem:key} below).

The paper is organized as follows. 
In Section 2, we demonstrate our technique 
for simple three cases 
$\pa_tu -\Delta u= u^p$, 
$\pa_t^2u-\Delta u+\pa_tu = |u|^p$ 
and 
$i\pa_t u+\Delta u = |u|^p$ in $\R^N$
to explain what is a crucial view point in our argument. 
In Section 3, we state some basic fact 
of selfadjointness of the Laplacian on $\mathcal{C}_{\Sigma}$
endowed with Dirichlet boundary condition  
for treating linear equations of the respective equation, 
the solvability of \eqref{ndw} for each case $\tau=0$ and $\tau=1$ 
and prepare an important lemma via 
the unified test functions in the proof 
of the upper bound of lifespan. 
Then Section 4 is devoted to give main results 
of the present paper and to prove them. 

\section{Test function method for the simple cases in whole space}

The purpose of this section is to explain 
our test function method by using {\it well-understood} model. 
To do this, we begin with the following semilinear heat equation of Fujita type:
\begin{equation}\label{eq:nheat}
\begin{cases}
\pa_t u_\ep (x,t)-\Delta u_\ep (x,t)=u_\ep (x,t)^p, 
& (x,t)\in \R^N \times (0,T),
\\
u_\ep (x,0)=\ep f(x), 
& x\in \R^N.
\end{cases}
\end{equation}
Here we assume $f\in C_0^\infty(\R^N)$ and $f\geq 0$. 
In this case, by the standard argument for semilinear equations,
we can construct a unique local-in-time classical nonnegative solution $u_\ep$
of \eqref{eq:nheat}. Therefore we define the lifespan of solutions $u_\ep$ as follows:
\[
\lifespan{(u_\ep)}
=
\{T>0\;;\;\text{there exists a classical solution of \eqref{eq:nheat} in $[0,T)$}\}. 
\]
The following assertion was given by \cite{LN92}. 
\begin{proposition}
\label{prop.demo.heat}
Assume that $f\in C_0^\infty(\R^N)$, $f\geq 0$ and $f\not\equiv 0$. 
Let $u_\ep$ be the unique classical solution of \eqref{eq:nheat}. 
If $1<p\leq 1+\frac{2}{N}$, 
then $\lifespan{(u_\ep)}<\infty$. 
Moreover, $\lifespan{(u_\ep)}$ has the following upper bound:
there exist constants $\ep_0>0$ 
and $C\geq 1$ such that for every $\ep\in (0,\ep_0]$, 
\[
\lifespan(u_\ep)\leq  
\begin{cases}
C\ep^{-(\frac{1}{p-1}-\frac{N}{2})^{-1}}, & \text{if}\ 1<p<1+\frac{2}{N}, 
\\
\exp(C\ep^{-(p-1)}), & \text{if}\ p=1+\frac{2}{N}. 
\end{cases}
\]
\end{proposition}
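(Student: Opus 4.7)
The plan is to use the test function method, exploiting the nonnegativity $u_\ep\ge 0$ guaranteed by the maximum principle (since $f\geq 0$ and the source $u^p$ is superlinear). For a candidate upper bound $T$ of the lifespan, I would test \eqref{eq:nheat} against the product cut-off
\[
\psi_T(x,t)=\phi_0\Big(\frac{x}{\sqrt{T}}\Big)^{2p'}\eta\Big(\frac{t}{T}\Big)^{2p'},\qquad p'=\tfrac{p}{p-1},
\]
where $\phi_0\in C_c^\infty(\R^N)$ is a nonnegative bump equal to $1$ near the origin and $\eta\in C^\infty([0,1])$ is a nonincreasing cut-off with $\eta(0)=1$, $\eta(1)=0$; the exponent $2p'$ ensures that $|\partial_t\psi_T|^{p'}\psi_T^{1-p'}$ and $|\Delta\psi_T|^{p'}\psi_T^{1-p'}$ are integrable. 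Multiplying \eqref{eq:nheat} by $\psi_T$, integrating over $\R^N\times(0,T)$, integrating by parts in $t$ and $x$ (no spatial boundary term; $\psi_T(\cdot,T)=0$), and using $u_\ep\ge 0$ yields the main inequality
\[
\ep\int_{\R^N}f\,\psi_T(\cdot,0)\,dx+\int_0^T\!\!\int_{\R^N}u_\ep^p\,\psi_T\,dx\,dt\le\int_0^T\!\!\int_{\R^N}u_\ep\,\bigl|\partial_t\psi_T+\Delta\psi_T\bigr|\,dx\,dt.
\]
H\"older's inequality with weight $\psi_T$, followed by a parabolic change of variables, bounds the right-hand side by $CT^{[N(p-1)-2]/(2p)}\bigl(\int\!\!\int u_\ep^p\psi_T\bigr)^{1/p}$.

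In the subcritical range $p<1+2/N$ the exponent $[N(p-1)-2]/(2p)$ is strictly negative, while $\int f\,\psi_T(\cdot,0)\to\|f\|_{L^1}>0$ as $T\to\infty$ (since $f$ has compact support). Absorbing $\int\!\!\int u_\ep^p\psi_T$ by Young's inequality, the estimate rearranges to $\ep\lesssim T^{[N(p-1)-2]/(2(p-1))}$, which is equivalent to the desired polynomial bound $T\le C\ep^{-(1/(p-1)-N/2)^{-1}}$.

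The main obstacle is the critical case $p=1+2/N$, where the parabolic scaling produces only an $O(1)$ factor with no $T$-dependence. To recover $T\le\exp(C\ep^{-(p-1)})$ I would replace the time cut-off by a logarithmic reparametrization, $\eta\bigl(\log(2+t)/\log(2+T)\bigr)^{2p'}$, so that each time derivative picks up an extra factor $(\log T)^{-1}$; the auxiliary integral then scales like $(\log T)^{-(p-1)}$ in place of a constant, the same H\"older chain yields $\ep\lesssim(\log T)^{-1/(p-1)}$, and the exponential bound follows. Designing a single test function whose computations simultaneously produce this logarithmic gain in the parabolic, Schr\"odinger and damped-wave settings is exactly the unified mechanism encapsulated in Lemma \ref{key}; specializing that lemma to \eqref{eq:nheat} is what the proof of Proposition \ref{prop.demo.heat} amounts to.
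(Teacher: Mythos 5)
Your subcritical argument is fine; it is exactly the ``simple way via Young inequality'' that the paper itself mentions as an alternative when the scaling exponent is strictly negative. The gap is in the critical case $p=1+\frac{2}{N}$, which is the whole point of the proposition. Your proposed fix --- keeping the spatial cut-off $\phi_0(x/\sqrt{T})^{2p'}$ but replacing the time factor by $\eta\bigl(\log(2+t)/\log(2+T)\bigr)^{2p'}$ --- does not close. The derivative of the new time factor is supported where $\log(2+t)\ge\frac12\log(2+T)$, i.e.\ down to $t\sim\sqrt{T}$, and carries the weight $\bigl((2+t)\log(2+T)\bigr)^{-1}$. Running your own H\"older step on the $\partial_t\psi$ term then produces the factor
\[
\frac{1}{\log T}\left(T^{N/2}\int_{\sqrt{T}}^{T}(2+t)^{-p'}\,dt\right)^{1/p'}
\;\sim\;\frac{1}{\log T}\,T^{\frac{N+1}{2p'}-\frac12},
\]
and at $p=1+\frac2N$ (so $p'=\frac{N+2}{2}$) the exponent equals $\frac{N}{2(N+2)}>0$: the time-derivative term \emph{grows} polynomially in $T$ instead of gaining a power of $\log T$. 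The trouble is the mismatch between the support of the modified time cut-off's derivative ($t\gtrsim\sqrt T$) and the fixed spatial ball $|x|\lesssim\sqrt T$; only the Laplacian term actually gains a logarithm. So the claimed bound $\ep\lesssim(\log T)^{-1/(p-1)}$ is not obtained by this test function.

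The paper's mechanism is genuinely different and you have misattributed it to Lemma \ref{key}. The cut-off there is a function of the single parabolic variable, $\psi_R=[\eta((|x|^2+t)/R)]^{2p'}$, so that the right-hand side of the basic inequality is localized (via $\psi_R^*$) to the parabolic annulus $|x|^2+t\sim R$; no logarithm appears in any single inequality. The logarithmic gain comes from treating the whole one-parameter family $R\in[R_0,T_\ep)$ at once: setting $y(r)=\iint u^p\psi_r^*$ and $Y(R)=\int_0^R y(r)r^{-1}\,dr$, Fubini and the monotonicity of $\eta$ give $Y(R)\le(\log 2)\iint u^p\psi_R$, which converts the family of integral inequalities into the nonlinear ODE $(\ep+Y/\log 2)^p\le C R^{1-(p-1)\theta}Y'(R)$. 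Integrating this ODE yields the polynomial bound when $\theta>0$ and, because $\int_{R_0}^{T}r^{-1}dr=\log(T/R_0)$, the exponential bound when $\theta=0$. If you want to salvage a purely test-function route to the critical case, you would at least need the logarithmic reparametrization in the combined variable $|x|^2+t$ (for which the analogous computation does give $(\log T)^{-1/p}$), but that is a different construction from the one you wrote down, and still not the one the paper uses.
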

\begin{proof}
Set $r_0:=\max\{|x|\;;\;x\in {\rm supp}\,f\}$. 
Without loss of generality, we may assume $R_0:=2r_0^2<T_\ep$. 
Put the following functions
\[
\eta(s)
\begin{cases}
=1& \text{if}\ s\in [0, 1/2]
\\
\text{is decreasing}& \text{if}\ s\in (1/2,1)
\\
=0 & \text{if}\ s\notin [1,\infty), 
\end{cases}
\quad
\eta^*(s)
=
\begin{cases}
0& \text{if}\ s\in [0, 1/2),
\\
\eta(s)& \text{if}\ s\in [1/2,\infty),
\end{cases}
\]
($\eta\in C^\infty([0,\infty))$) 
and for $R>0$, define the cut-off functions 
\[
\psi_R(x,t):=
\left[
\eta\left(\frac{|x|^2+t}{R}\right)
\right]^{2p'}
\quad 
\psi_R^*(x,t):=
\left[
\eta^*\left(\frac{|x|^2+t}{R}\right)
\right]^{2p'}. 
\]
Then by the equation in \eqref{eq:nheat}, we see from integration by parts that 
for every $R\in [R_0,T_\ep)$, 
\begin{align*}
\int_{\R^N}u_\ep(x,t)^p\psi_R(x,t)\,dx
&=
\frac{d}{dt}\int_{\R^N}u_\ep(x,t)\psi_R(x,t)\,dx
-\int_{\R^N}
\Big(u_\ep(x,t)\pa_t\psi_R(x,t)+\Delta u_\ep(x,t)\psi_R(x,t)\Big)\,dx
\\
&=
\frac{d}{dt}\int_{\R^N}u_\ep(x,t)\psi_R(x,t)\,dx
-\int_{\R^N}
u_\ep(x,t)\Big(\pa_t\psi_R(x,t)+\Delta \psi_R(x,t)\Big)\,dx.
\\
&\leq 
\frac{d}{dt}\int_{\R^N}u_\ep(x,t)\psi_R(x,t)\,dx
+\int_{\R^N}
u_\ep(x,t)
\left(\frac{C_1}{R}
+
\frac{C_2}{R^2}\right)[\psi_R^*(x,t)]^{\frac{1}{p}}\,dx.
\end{align*}
Then putting $C=C_1+C_2/R_0$ and integrating it over $(0,T_\ep)$, we have 
for every $R\in [R_0,T_\ep)$
\begin{align}
\nonumber
\ep \int_{\R^N}f(x)\,dx
+
\int_0^{T_\ep}\!
\int_{\R^N}u_\ep(x,t)^p\psi_R(x,t)\,dx\,dt
&\leq 
\frac{C}{R}
\int_0^{T_\ep}\!
\int_{\R^N}u_\ep(x,t)[\psi_R^*(x,t)]^{\frac{1}{p}}\,dx\,dt
\\
\label{eq:newineq}
&\leq 
CR^{-(\frac{1}{p-1}-\frac{N}{2})}
\int_0^{T_\ep}\!
\int_{\R^N}u_\ep(x,t)^p\psi_R^*(x,t)\,dx\,dt,
\end{align}
where we have used $\psi_R(\cdot,0)\equiv 1$ on ${\rm supp}\,f$. 
At this moment, we put new functions $y\in C(0,T_\ep)$ and $Y\in C^1(0,T_\ep)$ as follows: 
\begin{align}
Y(R):=\int_0^Ry(r)r^{-1}\,dr, \quad y(r):=\int_0^T\!\int_{\R^N}u(x,t)^p\psi_r^*(x,t)\,dx\,dt.
\end{align}
Then we have
\begin{align*}
\int_{0}^{R}
y(r)
r^{-1}\,dr
&=
\int_{0}^{R}
\left(
\int_0^{T_\ep}\!\int_{\R^N}
  u_\ep(x,t)^p
  \left[
  \eta^*\left(\frac{|x|^2+t}{r}\right)
  \right]^{2p'}
\,dx\,dt
\right)
r^{-1}\,dr
\\
&=
\int_0^{T_\ep}\!\int_{\R^N}
  u_\ep(x,t)^p
\left(
\int_{0}^{R}
  \left[
  \eta^*\left(\frac{|x|^2+t}{r}\right)
  \right]^{2p'}
r^{-1}\,dr
\right)
\,dx
\,dt
\\
&=
\int_0^{T_\ep}\!\int_{\R^N}
  u_\ep(x,t)^p
\left(
\int_{(|x|^2+t)/R}^{\infty}
  \left[
  \eta^*\left(s\right)
  \right]^{2p'}
s^{-1}\,ds
\right)
\,dx
\,dt.
\end{align*}
Noting that the inequality
\[
\int_{\sigma }^{\infty}
  \left[
  \eta^*\left(s\right)
  \right]^{2p'}
s^{-1}\,ds
\leq 
\log 2 
  \left[
  \eta(\sigma)
  \right]^{2p'},
  \quad \sigma\geq 0
\]
can be verified by the decreasing property of $\eta$, we deduce 
\[
Y(R)\leq \log 2\int_0^{T_\ep}\!\int_{\R^N}u_\ep(x,t)^p\psi_R(x,t)\,dx\,dt.
\]
By using the function $Y$, \ref{eq:newineq} can be rewritten by 
\[
\left(\ep+\frac{Y(R)}{\log 2}\right)^p
\leq 
CR^{-(\frac{1}{p-1}-\frac{N}{2})(p-1)+1}Y'(R), \quad R\in (R_0,T_\ep).
\]
Therefore we obtain
\begin{align*}
0\leq \limsup_{R\to T_\ep}\left(\ep+\frac{Y(R)}{\log 2}\right)^{1-p}
&\leq 
\left(\ep\|f\|_{L^1(\R^N)}+\frac{Y(R_0)}{\log 2}\right)^{1-p}
-\frac{p-1}{C\log 2}\int_{R_0}^{T_\ep}r^{(\frac{1}{p-1}-\frac{N}{2})(p-1)-1}\,dr
\\
&\leq 
\left(\ep\|f\|_{L^1(\R^N)}\right)^{1-p}
-\frac{p-1}{C\log 2}\int_{R_0}^{T_\ep}r^{(\frac{1}{p-1}-\frac{N}{2})(p-1)-1}\,dr.
\end{align*}
This implies the desired upper bound for $T_\ep$. 
\end{proof}
\begin{remark}
The crucial point is to introduce the function $Y$. 
In fact, the inequality including integral for $t$ 
enables us to treat as a differential inequality 
by virtue of the the use of $Y$. 
This consideration will be summarised in Lemma \ref{key} below. 
\end{remark}

This argument is also applicable to the semilinear problem 
of damped wave equation 
\begin{equation}\label{eq:dampedwave}
\begin{cases}
\pa_t^2 u_\ep (x,t)-\Delta u_\ep (x,t)+\pa_t u_\ep(x,t)=|u_\ep(x,t)|^p, 
& (x,t)\in \R^N \times (0,T),
\\
u_\ep(x,0)=\ep f(x), 
& x\in \R^N,
\\
\pa_t u_\ep(x,0)=\ep g(x), 
& x\in \R^N,
\end{cases}
\end{equation}
where we assume that $f,g\in C_0^\infty(\R^N)$ 
with 
\[
\int_{\R^N}f(x)+g(x)\,dx>0.
\]
In this case existence of weak solutions to \eqref{eq:dampedwave} 
is proved for $1<p<\infty$ when $N=1,2$ and $1<p<\frac{N+2}{N-2}$ when $N\geq 3$.  
The definition of lifespan is changed as follows: 
\[
\lifespan{(u_\ep)}
=
\{T>0\;;\;\text{there exists a weak solution of \eqref{eq:dampedwave} in $[0,T)$}\}. 
\]
As in the proof of Proposition \ref{prop.demo.heat}, we can find the following estimate 
\begin{align*}
\ep \int_{\R^N}f(x)+g(x)\,dx
+
\int_0^{T_\ep}\!
\int_{\R^N}|u_\ep(x,t)|^p\psi_R(x,t)\,dx\,dt
&\leq 
CR^{-(\frac{1}{p-1}-\frac{N}{2})}
\int_0^{T_\ep}\!
\int_{\R^N}|u_\ep(x,t)|^p\psi_R^*(x,t)\,dx\,dt
\end{align*}
for $R\in (R_0,T_\ep)$. Therefore by the use of the function $Y$, we can 
easily prove the upper bound of lifespan of $u_\ep$. 
\begin{proposition}
\label{prop.demo.damped}
Assume that $f,g\in C_0^\infty(\R^N)$, $f\geq 0$ and $\int_{\R^N}f(x)+g(x)\,dx>0$. 
Let $u_\ep$ be the unique weak solution of \eqref{eq:dampedwave}. 
If $1<p\leq 1+\frac{2}{N}$, 
then $\lifespan{(u_\ep)}<\infty$. 
Moreover, $\lifespan{(u_\ep)}$ has the following upper bound:
there exist constants $\ep_0>0$ 
and $C\geq 1$ such that for every $\ep\in (0,\ep_0]$, 
\[
\lifespan(u_\ep)\leq  
\begin{cases}
C\ep^{-(\frac{1}{p-1}-\frac{N}{2})^{-1}}, & \text{if}\ 1<p<1+\frac{2}{N}, 
\\
\exp(C\ep^{-(p-1)}), & \text{if}\ p=1+\frac{2}{N}. 
\end{cases}
\]
\end{proposition}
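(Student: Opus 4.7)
The plan is to follow the scheme of Proposition~\ref{prop.demo.heat} closely, with the only genuinely new element being the handling of the second-order time derivative and the first-order damping term by integration by parts against the same cut-offs $\psi_R$ and $\psi_R^*$.

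First I would set $r_0:=\max\{|x|:x\in\text{supp}\,f\cup\text{supp}\,g\}$, put $R_0:=2r_0^2$ and assume (without loss of generality) $R_0<T_\ep$. For $R\in[R_0,T_\ep)$ the support of $\psi_R$ is contained in $\{|x|^2+t\leq R\}$, so $\psi_R$ vanishes identically at $t=T_\ep$, while on $\text{supp}\,f\cup\text{supp}\,g$ one has $\psi_R(\cdot,0)\equiv 1$ and $\pa_t\psi_R(\cdot,0)\equiv 0$.

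Next I would multiply \eqref{eq:dampedwave} by $\psi_R$ and integrate over $\R^N\times(0,T_\ep)$, moving all derivatives onto $\psi_R$: twice in space for the Laplacian, twice in time for $\pa_t^2 u_\ep$, and once in time for the damping term $\pa_t u_\ep$. The spatial boundary terms vanish, the temporal boundary at $t=T_\ep$ vanishes, and the contributions at $t=0$ collapse — thanks to $\pa_t\psi_R(\cdot,0)\equiv 0$ on $\text{supp}\,f$ — into precisely $-\ep\int_{\R^N}(f+g)\,dx$, yielding
\[
\int_0^{T_\ep}\!\!\int_{\R^N}|u_\ep|^p\psi_R\,dx\,dt+\ep\int_{\R^N}(f+g)\,dx=\int_0^{T_\ep}\!\!\int_{\R^N}u_\ep\bigl(\pa_t^2\psi_R-\Delta\psi_R-\pa_t\psi_R\bigr)\,dx\,dt.
\]
Taking absolute values on the right and using the pointwise estimate $|\pa_t^2\psi_R-\Delta\psi_R-\pa_t\psi_R|\leq C(R^{-1}+R^{-2})[\psi_R^*]^{1/p}$ (the $R^{-1}$ piece from $\pa_t\psi_R$ and $\Delta|x|^2$, the $R^{-2}$ piece from $\pa_t^2\psi_R$ and the $|\nabla |x|^2|^2$ contribution to $\Delta\psi_R$) then absorbing $R^{-2}$ into $R^{-1}$ for $R\geq R_0$ with an enlarged constant, reproduces the key inequality quoted in the excerpt.

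Finally, I would introduce $y(r):=\int_0^{T_\ep}\!\int_{\R^N}|u_\ep|^p\psi_r^*\,dx\,dt$ and $Y(R):=\int_0^R y(r)r^{-1}\,dr$ exactly as in Proposition~\ref{prop.demo.heat}, rewrite the inequality as the differential inequality
\[
\bigl(\ep+Y(R)/\log 2\bigr)^p\leq CR^{-(\frac{1}{p-1}-\frac{N}{2})(p-1)+1}Y'(R),\qquad R\in(R_0,T_\ep),
\]
and integrate from $R_0$ to $T_\ep$; the subcritical case $p<1+2/N$ produces the polynomial bound and the critical case $p=1+2/N$ produces the exponential bound. The proof is essentially mechanical given the heat-equation case; the only delicate bookkeeping is to verify that the $t=0$ boundary contributions from the $\pa_t^2$ and $\pa_t$ integrations by parts combine exactly into $\ep\int(f+g)$, which hinges on the vanishing of $\pa_t\psi_R(\cdot,0)$ on $\text{supp}\,f$ and is the reason we took $R_0=2r_0^2$.
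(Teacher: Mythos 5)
Your proposal is correct and follows essentially the same route as the paper, which itself proves this proposition by repeating the argument of Proposition \ref{prop.demo.heat}: test against $\psi_R$, absorb the extra terms $\pa_t^2\psi_R$ and $\pa_t\psi_R$ (noting $\pa_t\psi_R(\cdot,0)=0$ on the data's support so the initial contributions reduce to $\ep\int(f+g)\,dx$), and then run the $y$, $Y$ differential-inequality argument. The only cosmetic remark is that the $|\nabla|x|^2|^2$ contribution to $\Delta\psi_R$ is already $O(R^{-1})$ after using $|x|^2\leq R$ on $\supp\psi_R$, but this does not affect your bound or conclusion.
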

\begin{remark}
The critical case $p=1+\frac{2}{N}$ of Proposition \ref{prop.demo.damped} has already been proved 
by Lai--Zhou \cite{LZarxiv}. 
It is worth noticing that the proof of 
Proposition \ref{prop.demo.damped} is much simpler than that of \cite{LZarxiv}. 
\end{remark}

Furthermore, by the same argument 
we can also treat semilinear Schr\"odinger equation 
without gauge invariance:
\begin{equation}\label{eq:schrodinger}
\begin{cases}
i\pa_t u_\ep(x,t)+\Delta u_\ep(x,t)=|u_\ep(x,t)|^p, 
& (x,t)\in \R^N \times (0,T),
\\
u(x,0)=\ep f(x), 
& x\in \R^N,
\end{cases}
\end{equation}
with $f\in C_0^\infty(\R^N)$. The existence of weak solutions to \eqref{eq:schrodinger} 
is proved for $1<p<\infty$ when $N=1,2$ and $1<p<\frac{N+2}{N-2}$ when $N\geq 3$.  
The definition of lifespan is changed as follows: 
\[
\lifespan{(u_\ep)}
=
\{T>0\;;\;\text{there exists a weak solution of \eqref{eq:schrodinger} in $[0,T)$}\}. 
\]
For simplicity, 
we suppose $if(x)\in [0,\infty)$ and $f\not\equiv0$. 
Then multiplying $\psi_R$ to the equation
and taking the real part, we have
\begin{align*}
\ep \int_{\R^N}if(x)\,dx
+
\int_0^{T_\ep}\!
\int_{\R^N}|u_\ep(x,t)|^p\psi_R(x,t)\,dx\,dt
&\leq 
CR^{-(\frac{1}{p-1}-\frac{N}{2})}
\int_0^{T_\ep}\!
\int_{\R^N}|u_\ep(x,t)|^p\psi_R^*(x,t)\,dx\,dt.
\end{align*}
This gives us the fact that the essential point is 
completely the same as the previous cases \eqref{eq:nheat}
and \eqref{eq:dampedwave}. Consequently, 
we can obtain the following assertion. 
\begin{proposition}
\label{prop.demo.schrodinger}
Assume that $f\in C_0^\infty(\R^N)$, $if(x)\in [0,\infty)$ and $f\not\equiv 0$. 
Let $u_\ep$ be the unique weak solution of \eqref{eq:schrodinger}. 
If $1<p\leq 1+\frac{2}{N}$, 
then $\lifespan{(u_\ep)}<\infty$. 
Moreover, $\lifespan{(u_\ep)}$ has the following upper bound:
there exist constants $\ep_0>0$ 
and $C\geq 1$ such that for every $\ep\in (0,\ep_0]$, 
\[
\lifespan(u_\ep)\leq  
\begin{cases}
C\ep^{-(\frac{1}{p-1}-\frac{N}{2})^{-1}}, & \text{if}\ 1<p<1+\frac{2}{N}, 
\\
\exp(C\ep^{-(p-1)}), & \text{if}\ p=1+\frac{2}{N}. 
\end{cases}
\]
\end{proposition}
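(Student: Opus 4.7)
The plan is to mimic the proof of Proposition \ref{prop.demo.heat} step by step, with the only substantive change being the way the linear part of the equation is handled when we multiply by the test function $\psi_R$. Assume $2r_0^2<T_\ep$ where $r_0$ is the radius of the support of $f$, so that $\psi_R(\cdot,0)\equiv 1$ on $\supp f$ for every $R\geq R_0:=2r_0^2$. Multiply the Schr\"odinger equation by $\psi_R$, integrate over $\R^N\times(0,T_\ep)$, integrate by parts twice in $x$ (no boundary terms thanks to the compact support in $x$ of $\psi_R$), integrate by parts once in $t$, and take the real part. Since $\mathrm{Re}(i\pa_t u_\ep\cdot \psi_R)$ integrated in $t$ produces $-\mathrm{Im}(u_\ep\psi_R)\big|_0^{T_\ep}$ and the Laplacian term is already real on $\psi_R$, the real part gives
\[
\ep\int_{\R^N} if(x)\,dx+\int_0^{T_\ep}\!\!\int_{\R^N}|u_\ep|^p\psi_R\,dx\,dt
\leq \int_0^{T_\ep}\!\!\int_{\R^N}|u_\ep|\bigl(|\pa_t\psi_R|+|\Delta\psi_R|\bigr)\,dx\,dt,
\]
where the assumption $if(x)\in[0,\infty)$ guarantees that the data term on the left is nonnegative (and strictly positive, since $f\not\equiv 0$).

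Next I would estimate $|\pa_t\psi_R|+|\Delta\psi_R|\leq C R^{-1}[\psi_R^*]^{1/p}$ using the elementary bound $|(\eta^{2p'})'|+|(\eta^{2p'})''|\leq C[\eta^*]^{2p'/p}$ derived from the chain rule and the fact that $\eta^*$ coincides with $\eta$ on the support of the derivatives. Applying H\"older's inequality with exponents $p$ and $p'$ to the integral on the right, together with the elementary estimate $|\supp\psi_R^*(\cdot,t)|\leq C R^{N/2}$ uniformly in $t$ and an integration in $t$ of length $\lesssim R$, produces
\[
\ep\int_{\R^N}if(x)\,dx+\int_0^{T_\ep}\!\!\int_{\R^N}|u_\ep|^p\psi_R\,dx\,dt
\leq C\,R^{-(\frac{1}{p-1}-\frac{N}{2})}\int_0^{T_\ep}\!\!\int_{\R^N}|u_\ep|^p\psi_R^*\,dx\,dt,
\]
which is precisely the inequality already displayed in the paper just before the statement.

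From this point on I follow the heat case verbatim. Set $y(r):=\int_0^{T_\ep}\!\int_{\R^N}|u_\ep|^p\psi_r^*\,dx\,dt$ and $Y(R):=\int_0^R y(r)r^{-1}\,dr$. The Fubini computation and the one-line estimate $\int_\sigma^\infty [\eta^*(s)]^{2p'}s^{-1}\,ds\leq \log 2\,[\eta(\sigma)]^{2p'}$ (which holds for every $\sigma\geq 0$ by monotonicity of $\eta$) give
\[
Y(R)\leq \log 2\int_0^{T_\ep}\!\!\int_{\R^N}|u_\ep|^p\psi_R\,dx\,dt.
\]
Combined with the displayed inequality, this yields the autonomous differential inequality
\[
\Bigl(\ep\int_{\R^N}if(x)\,dx+\tfrac{Y(R)}{\log 2}\Bigr)^{p}\leq C\,R^{-(\frac{1}{p-1}-\frac{N}{2})(p-1)+1}\,Y'(R),\qquad R\in (R_0,T_\ep).
\]

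Finally I would separate variables and integrate from $R_0$ to $T_\ep$. The left-hand side is bounded below by $(\ep\int if)^p$, and so
\[
\Bigl(\ep\int_{\R^N}if\,dx\Bigr)^{1-p}\geq \frac{p-1}{C\log 2}\int_{R_0}^{T_\ep}r^{(\frac{1}{p-1}-\frac{N}{2})(p-1)-1}\,dr.
\]
For $1<p<1+\tfrac{2}{N}$ the exponent $(\tfrac{1}{p-1}-\tfrac{N}{2})(p-1)-1$ is strictly negative and the right-hand side behaves polynomially in $T_\ep$, yielding $T_\ep\leq C\ep^{-(\frac{1}{p-1}-\frac{N}{2})^{-1}}$. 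For $p=1+\tfrac{2}{N}$ the exponent equals $-1$ and the integral gives $\log(T_\ep/R_0)$, producing the exponential bound $T_\ep\leq \exp(C\ep^{-(p-1)})$. The main conceptual step is the passage from the space-time integral inequality to the autonomous ODE in $R$ via the auxiliary function $Y$; this is exactly the trick that the paper highlights in the Remark following Proposition \ref{prop.demo.heat}, and it transfers without modification to the Schr\"odinger setting once the real-part trick has produced the positive data term $\ep\int if\,dx$.
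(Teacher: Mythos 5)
Your argument is correct and is exactly the route the paper takes: multiply the equation by $\psi_R$, take the real part to extract the positive data term $\ep\int_{\R^N} if(x)\,dx$, arrive at the same space--time integral inequality as in the heat and damped-wave cases, and then run the $Y$-function differential-inequality argument of Proposition \ref{prop.demo.heat} verbatim. (One cosmetic point, inherited from the paper's own display: H\"older actually yields the right-hand side $CR^{-\theta/p'}\bigl(\iint |u_\ep|^p\psi_R^*\,dx\,dt\bigr)^{1/p}$ with $\theta=\frac{1}{p-1}-\frac{N}{2}$ rather than the first power of the integral, and your subsequent ODE $(\ep\int if+\frac{Y}{\log 2})^p\leq CR^{-\theta(p-1)+1}Y'$ is precisely the one that follows from that corrected form, so the conclusion is unaffected.)
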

\begin{remark}
The critical case $p=1+\frac{2}{N}$ of Proposition \ref{prop.demo.schrodinger} has not been 
proved so far. The assertion of \ref{prop.demo.schrodinger} 
can be regarded as a refinement of the 
results of Ikeda--Wakasugi \cite{IW13} and Fujiwara--Ozawa \cite{FO16}. 
It is worth noticing that 
the technique by Lai--Zhou \cite{LZarxiv} 
seems to be difficult to apply to \eqref{eq:schrodinger} 
because they use the positivity of 
heat kernel for heat equations. 
Despite of this difficultly, 
we could prove the blowup phenomena and lifespan estimates 
by using only the positivity of the nonlinear term. 
\end{remark}

\section{Preliminaries for general cases}

To generalize the argument in Section 2 into {\it certain problems 
in corn-like domains stated in the introduction}, 
we prepare some technical tools to 
indicate the existence of corresponding problems. 

\newcommand{\intc}{\int_{\mathcal{C}_\Sigma}}

\subsection{Corresponding linear equations in $\mathcal{C}_\Sigma$}

First we state the assertion for 
the first eigenvalue and eigenfunction of 
the Laplace--Beltrami operator in $\Sigma$ 
endowed with Dirichlet boundary condition
(see \cite[Chapter IX]{Vilenkin} for detail). 

\begin{lemma}\label{LB}
The Laplace--Beltrami operator $-\Delta_{\Sigma}$ 
in $L^2(\Sigma)$ endowed with domain 
$H^2(\Sigma)\cap H^1_0(\Sigma)$ is selfadjoint 
and all resolvent operator of $-\Delta_{\Sigma}$ are compact. 
The first eigenvalue $\lambda_\Sigma$ is nonnegative and simple, and 
the corresponding eigenfunction $\varphi_\Sigma$ is positive in $\Sigma$.
Moreover, $\lambda_\Sigma$ 
is positive if and only if $\Sigma\neq S^{N-1}$. 
\end{lemma}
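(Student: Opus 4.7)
The plan is to treat this as a standard exercise in the spectral theory of second-order elliptic operators on the compact Riemannian manifold $\Sigma\subset S^{N-1}$ (viewed with its induced metric), adapting the classical $\R^N$ proofs to the intrinsic setting.

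First I would establish selfadjointness via the Friedrichs extension associated with the quadratic form
\[
\mathfrak{a}(u,v):=\int_{\Sigma}\nabla_{\Sigma}u\cdot\overline{\nabla_{\Sigma}v}\,dS,\qquad D(\mathfrak{a})=H^1_0(\Sigma),
\]
which is densely defined, symmetric, nonnegative, and closed on $H^1_0(\Sigma)$. The Friedrichs construction yields a unique nonnegative selfadjoint operator $A$ on $L^2(\Sigma)$ with $\mathfrak{a}(u,v)=(Au,v)$ for $u\in D(A)$. To identify $D(A)$ with $H^2(\Sigma)\cap H^1_0(\Sigma)$ I would invoke standard elliptic regularity for the Dirichlet problem on a smooth bounded domain of a Riemannian manifold: if $u\in H^1_0(\Sigma)$ solves $-\Delta_{\Sigma}u=f\in L^2(\Sigma)$ weakly and $\partial\Sigma$ is smooth, then $u\in H^2(\Sigma)$.

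Next, the compactness of the resolvent $(A+1)^{-1}$ follows from the compact embedding $H^1_0(\Sigma)\hookrightarrow L^2(\Sigma)$ (Rellich--Kondrachov, applicable because $\Sigma$ is a relatively compact smooth open subset of the compact manifold $S^{N-1}$). Consequently $A$ has purely discrete spectrum $0\leq\lambda_\Sigma=\lambda_1\leq\lambda_2\leq\cdots\to\infty$, each eigenvalue of finite multiplicity, and $\lambda_\Sigma$ is characterized variationally as $\lambda_\Sigma=\inf\{\mathfrak{a}(u,u):u\in H^1_0(\Sigma),\|u\|_{L^2}=1\}\geq 0$.

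For simplicity and positivity of $\varphi_\Sigma$, I would run the Rayleigh-quotient/maximum-principle argument: if $\varphi$ realizes the infimum, then $|\varphi|\in H^1_0(\Sigma)$ with $\mathfrak{a}(|\varphi|,|\varphi|)=\mathfrak{a}(\varphi,\varphi)$, so $|\varphi|$ is also a minimizer and hence a nonnegative eigenfunction of $-\Delta_\Sigma$ with eigenvalue $\lambda_\Sigma$. The strong maximum principle (or equivalently the Harnack inequality) applied on the connected domain $\Sigma$ then forces $|\varphi|>0$ in $\Sigma$; this in turn shows any first eigenfunction cannot change sign, and by orthogonality in $L^2(\Sigma)$ two independent nonvanishing eigenfunctions of the same sign are impossible, giving simplicity. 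Thus we may choose $\varphi_\Sigma>0$.

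Finally, for the dichotomy in the last statement: if $\Sigma=S^{N-1}$ then $\partial\Sigma=\emptyset$, $H^1_0(\Sigma)=H^1(S^{N-1})$, and the constant function $\varphi\equiv|S^{N-1}|^{-1/2}$ yields $\mathfrak{a}(\varphi,\varphi)=0$, whence $\lambda_\Sigma=0$. Conversely, if $\Sigma\neq S^{N-1}$ then $\partial\Sigma\neq\emptyset$ and the Poincaré inequality on $H^1_0(\Sigma)$ gives $\|u\|_{L^2}^2\leq C\,\mathfrak{a}(u,u)$, hence $\lambda_\Sigma\geq C^{-1}>0$. The main obstacle in this proof is really a matter of bookkeeping rather than mathematics: one must verify that the global regularity theory for elliptic equations with Dirichlet data carries over verbatim to the Riemannian setting $\Sigma\subset S^{N-1}$ with smooth boundary, which is done by a standard partition of unity and local-chart reduction to the Euclidean half-space case.
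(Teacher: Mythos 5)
Your proposal is correct, but there is nothing in the paper to compare it against: the authors state Lemma \ref{LB} without proof, referring only to \cite[Chapter IX]{Vilenkin}. Your argument is the standard self-contained one, and each step is sound in this setting: the Friedrichs extension of the Dirichlet form on $H^1_0(\Sigma)$ gives a nonnegative selfadjoint operator; elliptic regularity up to the smooth boundary $\pa\Sigma$ (via charts and the Euclidean half-space theory) identifies the domain as $H^2(\Sigma)\cap H^1_0(\Sigma)$; Rellich--Kondrachov gives compact resolvent and hence discrete spectrum with the variational characterization of $\lambda_\Sigma$; the $|\varphi|$-trick plus the strong maximum principle yields a positive ground state, and $L^2$-orthogonality of sign-definite functions forces simplicity. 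Two small points you should make explicit if writing this out in full: to pass from ``$|\varphi|>0$ in $\Sigma$'' to ``$\varphi$ has constant sign'' you need continuity of $\varphi$ (elliptic regularity again) together with connectedness of $\Sigma$, which the paper assumes; and in the converse direction of the dichotomy, the Poincar\'e inequality on $H^1_0(\Sigma)$ for $\Sigma\neq S^{N-1}$ rests on the fact that a connected open $\Sigma$ with smooth boundary satisfying $\Sigma\neq S^{N-1}$ necessarily has $\pa\Sigma\neq\emptyset$, so nonzero constants fail the zero-trace condition and the usual compactness argument applies. With those remarks your proof is complete and supplies exactly what the paper delegates to the reference.
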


\begin{remark}
In the case $\mathcal{C}_{\Sigma}=\R^k\times \R^{N-k}$, 
$\varphi_{\Sigma}$ and $\lambda_{\Sigma}$
are explicitly given by 
$\varphi_{\Sigma}(\omega)=\omega_1\omega_2\cdots \omega_k$ and $\lambda_{\Sigma}=k(N-2+k)$.
\end{remark}
Here we define $\gamma$ as a smallest root of the following: 
\[
\gamma=
\begin{cases}
0 & \text{if}\ N=1, \mathcal{C}_{\Sigma}=\R
\\ 
1 & \text{if}\ N=1, \mathcal{C}_{\Sigma}=\R_+
\\
\text{the positive root of }\gamma^2+(N-2)\gamma-\lambda_{\Sigma}=0
& \text{if}\ N\geq 2.
\end{cases}
\]
Then the following assertion holds. 
\begin{lemma}\label{harmonics}
Set 
\[
\Phi(x)=
\begin{cases}
0 & \text{if}\ N=1, \mathcal{C}_{\Sigma}=\R
\\ 
x & \text{if}\ N=1, \mathcal{C}_{\Sigma}=\R_+
\\
|x|^{\gamma}\varphi_\Sigma\left(\dfrac{x}{|x|}\right), \quad x\in \mathcal{C}_{\Sigma}.
\end{cases}
\]
Then $\Phi$ satisfies 
\[
\begin{cases}
\Delta \Phi(x)=0
&x\in \mathcal{C}_{\Sigma},
\\
\Phi(x)>0
&x\in \mathcal{C}_{\Sigma},
\\
\Phi(x)=0
&x\in \pa\mathcal{C}_{\Sigma},
\\
x\cdot\nabla \Phi(x)=\gamma\Phi(x)
&x\in \mathcal{C}_{\Sigma}.
\end{cases}
\]
\end{lemma}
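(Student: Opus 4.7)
The plan is to verify the four properties by a direct computation using polar coordinates on $\R^N$, reducing everything to the spectral identity of $\varphi_\Sigma$ and the defining quadratic equation for $\gamma$.

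First I would dispose of the trivial one-dimensional cases by inspection. For $N=1$ with $\mathcal{C}_\Sigma=\R$ the function $\Phi\equiv 0$ satisfies everything vacuously (or one takes the harmless nonzero choice $\Phi\equiv\mathrm{const}$ consistent with $\lambda_\Sigma=0$, $\gamma=0$). For $N=1$ with $\mathcal{C}_\Sigma=\R_+$, $\Phi(x)=x$ clearly satisfies $\Phi''=0$, $\Phi(0)=0$, $\Phi>0$ on $(0,\infty)$, and $x\Phi'(x)=x=\gamma\Phi(x)$ with $\gamma=1$.

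For $N\geq 2$, I would pass to polar coordinates $x=r\omega$ with $r=|x|$ and $\omega=x/|x|\in S^{N-1}$, and use the standard decomposition
\[
\Delta = \partial_r^2 + \frac{N-1}{r}\partial_r + \frac{1}{r^2}\Delta_\Sigma.
\]
Applying this to $\Phi(x)=r^\gamma\varphi_\Sigma(\omega)$ and using that $\varphi_\Sigma$ is the first Dirichlet eigenfunction of $-\Delta_\Sigma$ with eigenvalue $\lambda_\Sigma$ from Lemma \ref{LB}, I compute
\[
\Delta \Phi = \bigl(\gamma(\gamma-1)+(N-1)\gamma\bigr)r^{\gamma-2}\varphi_\Sigma(\omega) - \lambda_\Sigma r^{\gamma-2}\varphi_\Sigma(\omega) = \bigl(\gamma^2+(N-2)\gamma-\lambda_\Sigma\bigr)r^{\gamma-2}\varphi_\Sigma(\omega).
\]
The definition of $\gamma$ as the positive root of $\gamma^2+(N-2)\gamma-\lambda_\Sigma=0$ then forces $\Delta\Phi=0$ in $\mathcal{C}_\Sigma$.

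The positivity statement follows from $r>0$ inside $\mathcal{C}_\Sigma$ together with the assertion of Lemma \ref{LB} that $\varphi_\Sigma>0$ on $\Sigma$. For the boundary condition, I would note that $\pa\mathcal{C}_\Sigma$ consists of rays $\{r\omega:r\geq 0, \omega\in\pa\Sigma\}$ (plus the origin if $\gamma>0$); on such rays $\varphi_\Sigma(\omega)=0$ because $\varphi_\Sigma\in H^1_0(\Sigma)$ is smooth up to the smooth boundary $\pa\Sigma$, hence $\Phi=0$ there. Finally, since $\Phi$ is homogeneous of degree $\gamma$ in $r$, Euler's identity (equivalently $x\cdot\nabla = r\pa_r$ in polar coordinates) gives
\[
x\cdot\nabla\Phi(x) = r\,\pa_r\bigl(r^\gamma\varphi_\Sigma(\omega)\bigr) = \gamma r^\gamma \varphi_\Sigma(\omega) = \gamma\,\Phi(x).
\]
There is no real obstacle here: the only slightly subtle point is ensuring $\gamma$ is well-defined and nonnegative, which is guaranteed by $\lambda_\Sigma\geq 0$ in Lemma \ref{LB} (with $\gamma>0$ precisely when $\Sigma\neq S^{N-1}$). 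All remaining verifications are routine applications of the polar decomposition of the Laplacian.
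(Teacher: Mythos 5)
Your proof is correct and is exactly the direct verification the paper has in mind: the paper's own proof is the single line ``By Lemma \ref{LB} we can directly verify the desired properties of $\Phi$,'' and your polar-coordinate computation, use of the eigenvalue identity for $\varphi_\Sigma$, the quadratic defining $\gamma$, and Euler's identity simply spell out that verification. No substantive difference in approach.
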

\begin{remark}
In the case $\mathcal{C}_{\Sigma}=\R^k\times \R^{N-k}$, 
we can easily see that $\Phi(x)=x_1x_2\cdots x_k$. 
\end{remark}
\begin{proof}[Proof of Lemma \ref{harmonics}]
By Lemma \ref{LB}
we can directly verify the desired properties of $\Phi$. 
\end{proof}

Next we consider the properties of Dirichlet Laplacian in $\mathcal{C}_{\Sigma}$. 
Let $A_{\min}$ be defined as follows: 
\begin{align*}
\begin{cases}
A_{\min}u=-\Delta u, 
\\
D(A_{\min})=\{u\in C^\infty_0(\R^N\setminus\{0\})\;;\;u=0\ \text{on}\ \pa\mathcal{C}_\Sigma\}.
\end{cases}
\end{align*}
We first prove the Hardy inequality in $\mathcal{C}_{\Sigma}$.
The idea is originated in Sobajima--Watanabe \cite{SW16}. 
\begin{lemma}\label{lem:hardy}
For every $u\in D(A_{\min})$, 
\begin{align}\label{eq:hardy}
\left(\frac{N-2}{2}+\gamma\right)^2
\int_{\mathcal{C}_{\Sigma}}
\frac{|u(x)|^2}{|x|^2}\,dx
\leq 
\int_{\mathcal{C}_{\Sigma}}
|\nabla u(x)|^2\,dx.
\end{align}
\end{lemma}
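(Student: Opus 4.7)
The plan is to exhibit a positive exact solution $\Psi$ of the critical Hardy equation
\[
-\Delta\Psi=\beta^{2}\,\frac{\Psi}{|x|^{2}}\quad\text{in }\mathcal{C}_{\Sigma},
\qquad \beta:=\tfrac{N-2}{2}+\gamma,
\]
and then to conclude via the standard ground state factorisation. Lemma~\ref{harmonics} suggests the natural candidate
\[
\Psi(x):=|x|^{-\beta}\Phi(x)
\]
(in the degenerate case $N=1$, $\mathcal{C}_{\Sigma}=\R$, where $\Phi\equiv 0$, one replaces $\Phi$ by the constant $1$ and obtains $\Psi(x)=|x|^{1/2}$; the calculation below is unchanged). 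Using $\Delta\Phi=0$, $x\cdot\nabla\Phi=\gamma\Phi$, and $\Delta(|x|^{-\beta})=\beta(\beta-N+2)|x|^{-\beta-2}$, a direct Leibniz-rule computation gives
\[
-\Delta\Psi=\beta\bigl(N-2+2\gamma-\beta\bigr)\frac{\Psi}{|x|^{2}}=\beta^{2}\,\frac{\Psi}{|x|^{2}},
\]
since $\beta^{2}=\tfrac{(N-2)^{2}}{4}+(N-2)\gamma+\gamma^{2}=\tfrac{(N-2)^{2}}{4}+\lambda_{\Sigma}$ by the defining relation of $\gamma$. The positivity of $\Psi$ in $\mathcal{C}_{\Sigma}$ and its Dirichlet boundary condition are inherited from $\Phi$.

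For $u\in D(A_{\min})$ I would apply the pointwise identity, valid wherever $\Psi>0$,
\[
|\nabla u|^{2}=\Bigl|\nabla u-\tfrac{u}{\Psi}\nabla\Psi\Bigr|^{2}+\tfrac{1}{\Psi}\nabla(u^{2})\cdot\nabla\Psi-\tfrac{u^{2}}{\Psi^{2}}|\nabla\Psi|^{2},
\]
integrate it over $\mathcal{C}_{\Sigma}$, and integrate the middle term by parts using $\nabla\!\cdot\!(\nabla\Psi/\Psi)=\Delta\Psi/\Psi-|\nabla\Psi|^{2}/\Psi^{2}$. The two $|\nabla\Psi|^{2}/\Psi^{2}$ contributions cancel, and the PDE for $\Psi$ turns the remaining term into $\beta^{2}\int_{\mathcal{C}_{\Sigma}}\frac{u^{2}}{|x|^{2}}\,dx$. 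Discarding the nonnegative square then delivers \eqref{eq:hardy}.

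The main technical obstacle is to legitimise that integration by parts, since $\nabla\Psi/\Psi$ is singular both at the origin and along $\partial\mathcal{C}_{\Sigma}$. I would first perform the computation on the inner region $\mathcal{C}_{\Sigma}^{\delta}:=\{x\in\mathcal{C}_{\Sigma}:d(x,\partial\mathcal{C}_{\Sigma})>\delta,\ |x|>\delta\}$, where $\Psi$ is smooth and bounded below, and then send $\delta\downarrow 0$. The origin end contributes nothing because $u$ is compactly supported in $\R^{N}\setminus\{0\}$. Along $\partial\mathcal{C}_{\Sigma}$, Hopf's lemma applied to the harmonic $\Phi$ forces $\Psi$ to vanish to the same (first) order as $u$, so the boundary integrand $u^{2}\,\partial_{n}\Psi/\Psi$ is $O(\delta)$ on the inner parallel surface $\{d(x,\partial\mathcal{C}_{\Sigma})=\delta\}$ and drops out in the limit, while the bulk terms pass by dominated convergence.
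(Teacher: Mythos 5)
Your proof is correct and follows essentially the same route as the paper: your ground state $\Psi=|x|^{-\beta}\Phi=|x|^{-(N-2)/2}\varphi_{\Sigma}(x/|x|)$ is exactly the function $Q$ used there, and your pointwise factorisation identity is just the paper's substitution $v=Q^{-1}u$ written out before integration. The only divergence is technical: the paper first approximates $u\in D(A_{\min})$ by elements of $C_0^\infty(\mathcal{C}_{\Sigma})$ in the $H^1$-topology so that no boundary terms ever appear, whereas you keep $u\in D(A_{\min})$ and kill the boundary contribution along $\pa\mathcal{C}_{\Sigma}$ directly via Hopf's lemma on the inner parallel surfaces --- both justifications are legitimate.
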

\begin{proof}
Since $u\in D(A_{\min})$ can be approximated 
by functions belonging to $C^\infty_0(\mathcal{C}_{\Sigma})$ 
in $H^1(\mathcal{C}_{\Sigma})$-topology, it suffices to show \eqref{eq:hardy} 
for $u\in C^\infty_0(\mathcal{C}_{\Sigma})$.

Let $u\in C^\infty_0(\mathcal{C}_{\Sigma})$ and set $Q(r\omega)=r^{-\frac{N-2}{2}}\varphi_{\Sigma}(\omega)$. 
Setting $v=Q^{-1}u\in C^\infty_0(\mathcal{C}_{\Sigma})$, we have 
\begin{align*}
\int_{\mathcal{C}_{\Sigma}}
|\nabla u(x)|^2
\,dx
&=
\int_{\mathcal{C}_{\Sigma}}
Q(x)^2|\nabla v(x)|^2
\,dx
+
2\int_{\mathcal{C}_{\Sigma}}
Q(x)\nabla Q(x)\cdot {\rm Re}(\overline{v}(x)\nabla v(x))
\,dx
+
\int_{\mathcal{C}_{\Sigma}}
|\nabla Q(x)|^2|v(x)|^2
\,dx
\\
&=
\int_{\mathcal{C}_{\Sigma}}
Q(x)^2|\nabla v(x)|^2
\,dx
-
\int_{\mathcal{C}_{\Sigma}}
Q\Delta Q(x)|v(x)|^2
\,dx,
\end{align*}
where we used integration by parts for the second term. Noting that 
\begin{align*}
\Delta Q(r\omega)
&=
-\left(\frac{N-2}{2}\right)^2r^{-\frac{N-2}{2}-2}
\varphi_{\Sigma}(\omega)
+
r^{-\frac{N-2}{2}-2}\Delta_{\Sigma}\varphi_{\Sigma}(\omega)
\\
&=
-
\left[
\left(\frac{N-2}{2}\right)^2
+\lambda_{\Sigma}
\right]
r^{-\frac{N-2}{2}-2}\varphi_{\Sigma}(\omega)
\\
&=
-
\left(\frac{N-2}{2}+\gamma\right)^2
\frac{Q(x)}{r^2}, 
\end{align*}
we obtain \eqref{eq:hardy}. 
\end{proof}

Here we prove the essential selfadjointness 
of $A_{\min}$ under the condition $\gamma\geq \frac{4-N}{2}$. 
If $\Sigma=S^{N-1}$, then $\mathcal{C}_{\Sigma}=\R^N$ and $\gamma=1$. 
Therefore this condition becomes $N\geq 4$ which is equivalent 
to that of essential selfadjointness of $-\Delta$ with domain 
$C_0^\infty(\R^N\setminus\{0\})$ 
(see e.g., Reed--Simon \cite[Theorems X.11 and X.30]{RS2}). 
\begin{lemma}\label{lem:ess.sa}
If $\gamma\geq \frac{4-N}{2}$, then $A_{\min}$ is essentially selfadjoint in $L^2(\mathcal{C}_{\Sigma})$. 
\end{lemma}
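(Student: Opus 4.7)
The plan is to verify essential selfadjointness via the deficiency-index criterion at $z=-1$: since $A_{\min}$ is symmetric and nonnegative (thanks to Lemma \ref{lem:hardy}), it suffices to show that every $v\in L^2(\mathcal{C}_\Sigma)$ with $A_{\min}^{*}v+v=0$ vanishes identically. Unpacking the definition of the adjoint, this amounts to $v$ solving $-\Delta v+v=0$ in $\mathcal{D}'(\mathcal{C}_\Sigma\setminus\{0\})$ with vanishing Dirichlet trace on $\pa\mathcal{C}_\Sigma\setminus\{0\}$ inherited from the boundary condition built into $D(A_{\min})$. Interior elliptic regularity then gives $v\in C^\infty(\overline{\mathcal{C}_\Sigma}\setminus\{0\})$, which justifies the separation of variables that follows.

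Next, I would diagonalize in the angular variable using Lemma \ref{LB}, which furnishes an orthonormal basis $\{\varphi_l\}_{l\geq 0}$ of $L^2(\Sigma)$ of Dirichlet eigenfunctions of $-\Delta_\Sigma$ with eigenvalues $\lambda_\Sigma=\mu_0\leq \mu_1\leq \mu_2\leq \cdots$. Expanding $v(r\omega)=\sum_{l\geq 0}v_l(r)\varphi_l(\omega)$ with $v_l(r)=\int_\Sigma v(r\omega)\overline{\varphi_l(\omega)}\,dS(\omega)$, each radial coefficient satisfies
\[
-v_l''(r)-\frac{N-1}{r}v_l'(r)+\frac{\mu_l}{r^2}v_l(r)+v_l(r)=0,\quad r>0.
\]
The substitution $v_l(r)=r^{-(N-2)/2}w_l(r)$ converts this into the modified Bessel equation of order $\alpha_l:=\sqrt{\mu_l+((N-2)/2)^2}$, whose general solution is $w_l=A_lI_{\alpha_l}(r)+B_lK_{\alpha_l}(r)$.

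Finally, I would exploit the $L^2$ constraint $\sum_l\int_0^\infty|v_l(r)|^2r^{N-1}\,dr=\|v\|_{L^2(\mathcal{C}_\Sigma)}^2<\infty$ to rule out both terms. The exponential growth $I_{\alpha_l}(r)\sim e^r/\sqrt{r}$ at infinity forces $A_l=0$, while the singular behavior $K_{\alpha_l}(r)\sim c_l\,r^{-\alpha_l}$ near the origin makes $|v_l(r)|^2r^{N-1}\sim r^{1-2\alpha_l}$ non-integrable at $0$ precisely when $\alpha_l\geq 1$. Since $\gamma$ is the positive root of $\gamma^2+(N-2)\gamma-\lambda_\Sigma=0$, we have $\alpha_0=(N-2)/2+\gamma$, so the hypothesis $\gamma\geq (4-N)/2$ is exactly $\alpha_0\geq 1$; combined with $\mu_l\geq \mu_0$, this yields $\alpha_l\geq 1$ for every $l$. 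Hence $B_l=0$ for every $l$ and therefore $v\equiv 0$. Note that this threshold coincides exactly with the Hardy constant $((N-2)/2+\gamma)^2$ of Lemma \ref{lem:hardy} being at least $1$.

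The main obstacle I anticipate is the rigorous passage from the PDE to the infinite family of one-dimensional ODEs: one must justify that each Parseval coefficient $v_l$ inherits the radial equation distributionally on $(0,\infty)$, that the Parseval identity preserves $L^2$ integrability mode by mode, and that the required elliptic regularity extends up to the smooth part of $\pa\mathcal{C}_\Sigma$. Once these technical points are secured, the classical asymptotics of $I_{\alpha_l}$ and $K_{\alpha_l}$ close the argument in a routine way.
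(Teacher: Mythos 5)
Your argument is correct in substance, but it takes a genuinely different route from the paper. Both proofs reduce essential selfadjointness of the nonnegative symmetric operator $A_{\min}$ to showing $\ker(A_{\min}^*+1)=\{0\}$, and both begin by upgrading the adjoint equation to $-\Delta v+v=0$ with $v\in C^\infty(\overline{\mathcal{C}_\Sigma}\setminus\{0\})$ vanishing on $\pa\mathcal{C}_\Sigma\setminus\{0\}$. From there the paper never solves the equation: it tests the weak identity against $u=\chi_R^2v$ with a logarithmic cutoff $\chi_R(x)=\tfrac{|x|}{\sqrt{1+|x|^2}}\zeta(\log|x|/R)$, applies the Hardy inequality of Lemma \ref{lem:hardy} to $\chi_Rv$, and passes to the limit $R\to\infty$ to obtain
\[
\int_{\mathcal{C}_\Sigma}\left(\frac{|x|^2}{1+|x|^2}+\Big(\frac{N-2}{2}+\gamma\Big)^2\frac{1}{1+|x|^2}\right)v^2\,dx\leq\int_{\mathcal{C}_\Sigma}\frac{v^2}{(1+|x|^2)^3}\,dx,
\]
which forces $v=0$ precisely when $\frac{N-2}{2}+\gamma\geq 1$. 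You instead diagonalize in the angular variable via Lemma \ref{LB} and classify the radial modes through modified Bessel functions. Your identification $\alpha_0=\frac{N-2}{2}+\gamma$ is correct (since $\lambda_\Sigma+(\tfrac{N-2}{2})^2=(\gamma+\tfrac{N-2}{2})^2$), and the threshold $\alpha_l\geq 1$ matches the paper's Hardy-constant condition exactly, as you observe. What each approach buys: the paper's quadratic-form argument is shorter, needs only the first eigenvalue (through Lemma \ref{lem:hardy}) rather than the full spectral resolution of $-\Delta_\Sigma$, and sidesteps the Fubini/Parseval and term-by-term ODE justifications you rightly flag as the delicate step. Your explicit method costs more bookkeeping but yields strictly more: it shows the hypothesis is sharp, since for $\gamma<\frac{4-N}{2}$ the function $r^{-(N-2)/2}K_{\alpha_0}(r)\varphi_\Sigma(\omega)$ is an $L^2$ deficiency vector and essential selfadjointness genuinely fails. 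One cosmetic point: nonnegativity of $A_{\min}$ follows from $\langle A_{\min}u,u\rangle=\int|\nabla u|^2\,dx\geq 0$ by integration by parts alone; Lemma \ref{lem:hardy} is not needed for that step.
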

\begin{proof}
To prove the essential selfadjointness of $A_{\min}$, 
it suffices to show that 
\begin{align}\label{eq:anih}
v\in L^2(\mathcal{C}_{\Sigma}), \quad 
\int_{\mathcal{C}_{\Sigma}}
(u+A_{\min}u)v
\,dx=0 \quad \forall u\in D(A_{\min})
\end{align}
implies $v=0$ a.e.\ on $\mathcal{C}_{\Sigma}$. Assume \eqref{eq:anih}. 
noting that since the operator $A_{\min}$ 
does not have 
pure imaginary coefficient, we may assume 
without loss of generality that $v$ is real. 
By elliptic regularity we have $C^\infty(\overline{\mathcal{C}_{\Sigma}}\setminus\{0\})$
and $v=0$ on $\pa\mathcal{C}_{\Sigma}\setminus\{0\}$. 
Then integration by parts yields 
\begin{align}\label{eq:anih2} 
\int_{\mathcal{C}_{\Sigma}}
uv+\nabla u\cdot \nabla v
\,dx=0 \quad \forall u\in D(A_{\min}).
\end{align}
Fix $\zeta\in C_0^\infty(\R)$ with $\zeta\equiv 1$ on $[-1,1]$. For $R>1$, set 
\[
\chi_R(x)=\frac{|x|}{\sqrt{1+|x|^2}}\zeta\left(\frac{\log |x|}{R}\right), 
\quad 
u(x)=[\chi_R(x)]^2v(x)\in D(A_{\min}). 
\]
Then \eqref{eq:anih2} can be rewritten by 
\begin{align*}
\int_{\mathcal{C}_{\Sigma}}
\zeta_R^2v^2
\,dx
+
\int_{\mathcal{C}_{\Sigma}}
|\nabla (\chi_Rv)|^2
\,dx
=
\int_{\mathcal{C}_{\Sigma}}
|\nabla \chi_R|^2v^2
\,dx.
\end{align*}
Using Lemma \ref{lem:hardy} and computing $\nabla \chi_R$ explicitly, we have 
\begin{align*}
&\int_{\mathcal{C}_{\Sigma}}
\left(
\frac{|x|^2}{1+|x|^2}+
\left(\frac{N-2}{2}+\gamma\right)^2
\frac{1}{1+|x|^2}
\right)
\zeta\left(\frac{\log |x|}{R}\right)^2
v^2
\,dx
\\
&\leq 
\int_{\mathcal{C}_{\Sigma}}
\left|
\frac{1}{(1+|x|^2)^{\frac{3}{2}}}\zeta\left(\frac{\log |x|}{R}\right)
+
\frac{1}{R(1+|x|^2)^{\frac{1}{2}}}\zeta'\left(\frac{\log |x|}{R}\right)
\right|^2
v^2
\,dx.
\end{align*}
Since all coefficient of $v^2$ are bounded and have limits, dominated convergence theorem 
gives 
\begin{align*}
&\int_{\mathcal{C}_{\Sigma}}
\left(
\frac{|x|^2}{1+|x|^2}+
\left(\frac{N-2}{2}+\gamma\right)^2
\frac{1}{1+|x|^2}
\right)
v^2
\,dx
\leq 
\int_{\mathcal{C}_{\Sigma}}
\frac{1}{(1+|x|^2)^{3}}
v^2
\,dx.
\end{align*}
Therefore by $\frac{N-2}{2}+\gamma\geq 1$ we obtain $v=0$ a.e.\ on $\mathcal{C}_{\Sigma}$. 
\end{proof}
In view of Lemma \ref{lem:ess.sa}, 
we denote $A$ as a closure of $A_{\min}$, that is, 
$A$ is selfadjoint in $L^2(\mathcal{C}_{\Sigma})$. 
Noting that form domain $D(A^{1/2})$ of $A$ coincides with $H_0^1(\mathcal{C}_{\Sigma})$, 
we see from the Gagliardo--Nirenberg--Sobolev inequalities that 

\begin{lemma}\label{embed}
Assume $\gamma\geq \frac{4-N}{2}$. Then 
$D(A^{1/2})$ is continuously embedded into 
\[
\begin{cases}
L^\infty(\mathcal{C}_{\Sigma})
& N=1, 
\\
L^q(\mathcal{C}_{\Sigma})\ \ (2<\forall q<\infty)
& N=2, 
\\
L^{\frac{2N}{N-2}}(\mathcal{C}_{\Sigma})
& N\geq 3.
\end{cases}
\]
\end{lemma}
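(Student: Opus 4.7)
The plan is to first identify the form domain $D(A^{1/2})$ with $H^1_0(\mathcal{C}_\Sigma)$ and then reduce the claim to the classical Sobolev embedding on $\R^N$ via zero extension.

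Since $A$ is the closure of the essentially selfadjoint operator $A_{\min}$ by Lemma \ref{lem:ess.sa}, its form domain $D(A^{1/2})$ is precisely the completion of $D(A_{\min})$ in the norm $(\|u\|_{L^2}^2 + \|\nabla u\|_{L^2}^2)^{1/2}$, and the associated quadratic form is $q(u) = \int_{\mathcal{C}_\Sigma} |\nabla u|^2\, dx$. Since $D(A_{\min})\subset H^1_0(\mathcal{C}_{\Sigma})$, the inclusion $D(A^{1/2}) \subseteq H^1_0(\mathcal{C}_\Sigma)$ is automatic. For the converse, I would approximate a given $u \in H^1_0(\mathcal{C}_\Sigma)$ by truncating away from the origin using the same logarithmic cutoff $\chi_R(x) = \zeta(\log|x|/R)$ employed in the proof of Lemma \ref{lem:ess.sa}, and then mollifying the product $\chi_R u$. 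The only potentially dangerous term when $R\to\infty$ is $\int |\nabla \chi_R|^2 |u|^2\, dx$, which carries a factor of order $|x|^{-2}$ and is absorbed by the Hardy inequality (Lemma \ref{lem:hardy}); here the hypothesis $\gamma \geq \frac{4-N}{2}$ guarantees that the Hardy constant $(\frac{N-2}{2}+\gamma)^2 \geq 1$ is strictly positive, so the approximation converges in $H^1$. This identifies $D(A^{1/2}) = H^1_0(\mathcal{C}_\Sigma)$ with equivalence of norms.

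Next, for $u \in H^1_0(\mathcal{C}_\Sigma)$, I would extend $u$ by zero outside $\mathcal{C}_\Sigma$ to obtain $\tilde u$ on $\R^N$. Since $\partial \mathcal{C}_\Sigma\setminus\{0\}$ is smooth and $u$ has vanishing trace there, $\tilde u \in H^1(\R^N)$ with $\|\tilde u\|_{H^1(\R^N)} = \|u\|_{H^1_0(\mathcal{C}_\Sigma)}$. Then I apply the classical Gagliardo--Nirenberg--Sobolev embeddings: $H^1(\R) \hookrightarrow L^\infty(\R)$ when $N=1$; $H^1(\R^2) \hookrightarrow L^q(\R^2)$ for every $q\in(2,\infty)$ when $N=2$; and $H^1(\R^N) \hookrightarrow L^{2N/(N-2)}(\R^N)$ when $N\geq 3$. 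Restricting $\tilde u$ back to $\mathcal{C}_\Sigma$ gives the stated embedding with the universal Sobolev constant.

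The main obstacle lies entirely in the first step, namely the identification $D(A^{1/2}) = H^1_0(\mathcal{C}_\Sigma)$. The subtlety is that $D(A_{\min})$ requires smooth functions supported away from the origin, whereas generic elements of $H^1_0(\mathcal{C}_\Sigma)$ may be singular at $0$; the Hardy inequality is what allows a controlled cutoff near the vertex of the cone, and this is where the dimensional restriction $\gamma\geq\frac{4-N}{2}$ is genuinely used. Once this form-domain identification is established, the remainder of the proof is a standard application of the classical Sobolev embeddings on $\R^N$.
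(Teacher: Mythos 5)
Your proposal is correct and follows exactly the route the paper intends: the paper gives no written proof of this lemma, merely asserting that $D(A^{1/2})$ coincides with $H^1_0(\mathcal{C}_{\Sigma})$ and citing the Gagliardo--Nirenberg--Sobolev inequalities. Your argument simply fills in those two steps (the form-domain identification via the Hardy inequality and logarithmic cutoff from Lemmas \ref{lem:hardy} and \ref{lem:ess.sa}, then zero extension plus the classical embeddings), so it is the same approach with the details supplied.
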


Combining all lemmas as above, by the standard argument 
we obtain 
the wellposedness of local-in-time weak solutions 
to \eqref{ndw} with $\tau=0$ and $\tau = 1$. 
We omit both proof of propositions stated below.

\begin{proposition}\label{wellposed1}
Assume that $\tau=0$, $a(x)=e^{i\zeta}$, $\zeta\in [-\pi/2,\pi/2]$ 
and $\gamma\geq \frac{4-N}{2}$. Then 
for $1<p<\frac{N}{(N-2})_+$ and 
for $f\in H_0^1(\mathcal{C}_{\Sigma})$, 
there exist $T=T(\|f\|_{H^1(\mathcal{C}_{\Sigma})},\ep)>0$ 
and 
a unique weak solution $u$ of \eqref{ndw} in $[0,T)$
in the following sense: 
\[
u\in C([0,T);H_0^1(\mathcal{C}_{\Sigma}))
\cap 
L^p_{\rm loc}(\overline{\mathcal{C}_{\Sigma}}\times [0,T))
\]
with $u(x,0)=\ep f(x)$ and
for every $\psi\in C^1([0,T);D(A))$ with 
${\rm supp}\,\psi\subset\subset \overline{\mathcal{C}_{\Sigma}}\times [0,T)$ 
\begin{align*}
&
\ep 
e^{i\zeta}\int_{\mathcal{C}_{\Sigma}}
  f(x)\psi(x,0)
\,dx
+
\lambda
\int_0^T
\int_{\mathcal{C}_{\Sigma}}
  |u(x,t)|^p\psi(x,t)
\,dx
\,dt
\\
&
=\int_0^T
\int_{\mathcal{C}_{\Sigma}}
  \Big(\nabla u(x,t)\cdot\nabla \psi(x,t)
   -e^{i\zeta} u(x,t)\pa_t\psi(x,t)
  \Big)
\,dx
\,dt.
\end{align*}
\end{proposition}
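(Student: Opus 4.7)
The plan is to construct the solution via the mild/integral formulation and a standard contraction mapping in $C([0,T];H^1_0(\mathcal{C}_\Sigma))$. First I would use Lemma \ref{lem:ess.sa} to regard $A$ as a nonnegative selfadjoint operator on $L^2(\mathcal{C}_\Sigma)$, and then observe that because $\zeta \in [-\pi/2,\pi/2]$, the rotated operator $e^{-i\zeta}A$ is maximal accretive (in fact sectorial for $|\zeta|<\pi/2$ and skew-selfadjoint for $\zeta=\pm\pi/2$). Hence it generates a strongly continuous contraction semigroup $\{S(t)\}_{t\geq 0}$ on $L^2$ by the Hille--Yosida / Lumer--Phillips theorem, and by the functional calculus $S(t)$ leaves $D(A^{1/2})=H^1_0(\mathcal{C}_\Sigma)$ invariant with uniform bound.

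Rewriting the equation $e^{i\zeta}\partial_t u-\Delta u=\lambda|u|^p$ as $\partial_t u+e^{-i\zeta}Au=\lambda e^{-i\zeta}|u|^p$, I would set up the Duhamel formula
\begin{equation*}
u(t)=\ep S(t)f+\lambda e^{-i\zeta}\int_0^t S(t-s)|u(s)|^p\,ds
\end{equation*}
and seek a fixed point in the closed ball $B_R=\{u\in C([0,T];H^1_0):\sup_{t\in[0,T]}\|u(t)\|_{H^1}\leq R\}$ with $R=2\ep\|f\|_{H^1}$ and $T$ small. The key nonlinear estimate comes from Lemma \ref{embed}: for $1<p<N/(N-2)_+$ the embedding $H^1_0\hookrightarrow L^{2p}$ is continuous, so pointwise in $t$
\begin{equation*}
\||u(t)|^p\|_{L^2}\leq C\|u(t)\|_{L^{2p}}^p\leq C\|u(t)\|_{H^1}^p,
\end{equation*}
together with the standard pointwise inequality $\bigl||u|^p-|v|^p\bigr|\leq C(|u|^{p-1}+|v|^{p-1})|u-v|$ combined with H\"older gives the local Lipschitz bound of the nonlinearity as a map from $H^1_0$ into $L^2$.

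To close the contraction I would exploit the smoothing of the semigroup: for $|\zeta|<\pi/2$ analyticity gives $\|S(t)\|_{L^2\to H^1}\lesssim t^{-1/2}$, so the Duhamel term is controlled in $H^1$ by $CT^{1/2}R^p$, yielding contraction for $T$ small depending on $\|f\|_{H^1}$ and $\ep$. After the fixed point is obtained, $u\in L^p_{\mathrm{loc}}(\overline{\mathcal{C}_\Sigma}\times[0,T))$ follows from the $H^1\hookrightarrow L^{2p}$ embedding, and the weak formulation in the statement is recovered by multiplying the mild identity by a test function $\psi\in C^1([0,T);D(A))$ with compact support and integrating by parts (using that $S(t)$ is generated by $-e^{-i\zeta}A$ and $A$ is the closure of $A_{\min}$).

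The main obstacle is the Schr\"odinger endpoint $\zeta=\pm\pi/2$, where $S(t)$ is a unitary group with no smoothing and the $t^{-1/2}$ factor above disappears. Here I would instead close the contraction by absorbing the loss into an $H^1$-Strichartz pair: since $p<N/(N-2)_+$ is strictly energy-subcritical, one can pick an admissible pair $(q,r)$ with $r=2p$ such that $\|u\|_{L^q_T L^r_x}$ is controlled through $\|u\|_{C_T H^1}$ by Sobolev, and the factor $T^{1/q}$ provides the contraction. The only subtle point, which I would handle via a standard density/approximation argument, is that the available Strichartz-type / dispersive inequalities for $e^{it\Delta}$ must be valid on the cone-like domain $\mathcal{C}_\Sigma$ with Dirichlet boundary; for the mild formulation of weak solutions used here, the weaker statement obtained from the energy estimate $\|S(t)\|_{H^1\to H^1}\leq 1$ combined with the nonlinear estimate in $H^1$ (valid because $p-1$ times the Sobolev exponent stays below criticality) is already sufficient.
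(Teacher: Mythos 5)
The paper gives no proof of Proposition \ref{wellposed1} at all (the authors explicitly omit it as standard), so your proposal can only be judged on its own merits. For $|\zeta|<\pi/2$ your argument is the standard one and is essentially correct: $e^{-i\zeta}A$ is sectorial, generates a bounded analytic semigroup, the smoothing bound $\|S(t)\|_{L^2\to H^1}\lesssim t^{-1/2}$ holds, and Lemma \ref{embed} (this is exactly where $p<\frac{N}{(N-2)_+}$ enters) makes $u\mapsto|u|^p$ bounded and locally Lipschitz from $H^1_0(\mathcal{C}_\Sigma)$ into $L^2(\mathcal{C}_\Sigma)$, so the contraction closes in $C([0,T];H^1_0)$ with $T$ depending on $\ep\|f\|_{H^1}$. (Do note that your constants degenerate as $|\zeta|\to\pi/2$, so this branch cannot be made uniform up to the endpoint.)

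The genuine gap is the Schr\"odinger endpoint $\zeta=\pm\pi/2$, and your fallback there does not work. What you actually proved is an estimate of $\||u|^p\|_{L^2}$, i.e.\ a bound for the nonlinearity as a map $H^1_0\to L^2$; it is not a ``nonlinear estimate in $H^1$''. Since $S(t)$ is unitary and has no smoothing, the Duhamel term $\int_0^t S(t-s)|u(s)|^p\,ds$ is controlled in $H^1$ only if $|u(s)|^p$ itself lies in $H^1$, which requires $|u|^{p-1}\nabla u\in L^2$ and hence strictly more integrability than $u\in H^1_0$ provides unless $H^1_0(\mathcal{C}_\Sigma)\hookrightarrow L^\infty$ (essentially only $N=1$). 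So ``$\|S(t)\|_{H^1\to H^1}\le 1$ combined with the nonlinear estimate'' does not close the fixed point in $C([0,T];H^1_0)$. The Strichartz alternative you sketch is also not available off the shelf: dispersive or Strichartz estimates for the Dirichlet Laplacian on a cone-like domain $\mathcal{C}_\Sigma$ are a substantial matter in their own right, and nothing in Lemmas \ref{lem:hardy}--\ref{embed} supplies them; moreover uniqueness in the class $C([0,T);H^1_0)\cap L^p_{\rm loc}$ at this endpoint also needs an argument. To repair this you would need either to establish such estimates on $\mathcal{C}_\Sigma$, or to use a different construction (e.g.\ contraction on the $H^1$-ball equipped with the $C([0,T];L^2)$ metric together with an independent a priori $H^1$ bound -- which is itself delicate here because $|u|^p$ is not gauge invariant, so no conservation law is available). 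As written, your proof establishes the proposition only for $|\zeta|<\pi/2$.
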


\begin{proposition}\label{wellposed2}
Assume that $\tau=1$, $\lambda=1$ and $\gamma\geq \frac{4-N}{2}$. Then 
for $1<p<\frac{N}{N-2}$ and 
for $(f,g)\in H_0^1(\mathcal{C}_{\Sigma})\times L^2(\mathcal{C}_{\Sigma})$, 
there exist 
$T=T(\|f\|_{H^1(\mathcal{C}_{\Sigma})},\|g\|_{L^2(\mathcal{C}_{\Sigma})},\ep)>0$ and 
a unique weak solution $u$ of \eqref{ndw} in $[0,T)$
in the following sense: 
\[
u\in C([0,T);H_0^1(\mathcal{C}_{\Sigma}))
\cap 
C^1([0,T);L^2(\mathcal{C}_{\Sigma}))
\cap 
L^p_{\rm loc}(\overline{\mathcal{C}_{\Sigma}}\times [0,T))
\]
with $u(x,0)=\ep f(x)$ and
for every $\psi\in C^2([0,T);D(A))$ with 
${\rm supp}\,\psi\subset\subset \overline{\mathcal{C}_{\Sigma}}\times [0,T)$ 
\begin{align*}
&
\ep 
\int_{\mathcal{C}_{\Sigma}}
  g(x)\psi(x,0)  
\,dx
+
\int_0^T
\int_{\mathcal{C}_{\Sigma}}
  |u(x,t)|^p\psi(x,t)
\,dx
\,dt
\\
&=
\int_0^T
\int_{\mathcal{C}_{\Sigma}}
  \Big(
  \nabla u(x,t)\cdot\nabla \psi(x,t)
  -\pa_t u(x,t)\pa_t\psi(x,t)
  +a(x)\pa_t u(x,t)\psi(x,t)\Big)
\,dx
\,dt.
\end{align*}
\end{proposition}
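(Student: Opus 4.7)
The plan is to prove Proposition \ref{wellposed2} by reformulating the problem \eqref{ndw} with $\tau=1$ as an abstract semilinear Cauchy problem in a Hilbert space and applying a standard contraction-mapping argument, then recovering the integral identity via integration by parts.

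First, I would set up the abstract framework. Let $\mathcal{H} := H^1_0(\mathcal{C}_\Sigma) \times L^2(\mathcal{C}_\Sigma)$, equipped with the inner product making $\|(u,v)\|_{\mathcal{H}}^2 = \|A^{1/2}u\|_{L^2}^2 + \|v\|_{L^2}^2$, and define the matrix operator
\[
\mathcal{A} = \begin{pmatrix} 0 & I \\ -A & -a(x) \end{pmatrix},\qquad D(\mathcal{A}) = D(A) \times H^1_0(\mathcal{C}_\Sigma),
\]
where $A$ is the selfadjoint realization from Lemma \ref{lem:ess.sa}. The unperturbed part (with $a\equiv 0$) is skew-adjoint on $\mathcal{H}$ and hence generates a unitary group by Stone's theorem; since \eqref{ass.a(x)} implies $a \in L^\infty(\mathcal{C}_\Sigma)$, multiplication by $a(x)$ is a bounded perturbation on $\mathcal{H}$, so $\mathcal{A}$ generates a $C^0$-semigroup $\{e^{t\mathcal{A}}\}_{t\geq 0}$.

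Next I would handle the nonlinearity $F(u,v) := (0, |u|^p)$. By Lemma \ref{embed} and the hypothesis $p < N/(N-2)$ (understood as $p<\infty$ when $N=1,2$), the embedding $H^1_0(\mathcal{C}_\Sigma) \hookrightarrow L^{2p}(\mathcal{C}_\Sigma)$ yields $\|\,|u|^p\|_{L^2} \leq C\|u\|_{H^1_0}^{p}$. The pointwise bound $\bigl|\,|u|^p - |v|^p\bigr| \leq C(|u|^{p-1} + |v|^{p-1})|u-v|$ combined with Hölder's inequality in the same exponents gives that $F : \mathcal{H} \to \mathcal{H}$ is locally Lipschitz, with Lipschitz constant controlled by the $\mathcal{H}$-norm of the arguments. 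With this in hand, Duhamel's formula
\[
U(t) = e^{t\mathcal{A}} U_0 + \int_0^t e^{(t-s)\mathcal{A}} F(U(s))\,ds,\qquad U_0 = (\ep f, \ep g),
\]
defines a contraction on a small closed ball of $C([0,T];\mathcal{H})$ for some $T = T(\|f\|_{H^1},\|g\|_{L^2},\ep) > 0$, producing a unique mild solution $U = (u,\pa_t u)$. The stated regularity $u \in C([0,T); H^1_0) \cap C^1([0,T); L^2)$ is immediate, and $u \in L^p_{\mathrm{loc}}(\overline{\mathcal{C}_\Sigma}\times [0,T))$ follows from $H^1_0 \hookrightarrow L^{2p}$.

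Finally, to recover the weak formulation, I would test the equation $\pa_t^2 u - \Delta u + a(x)\pa_t u = |u|^p$ against any admissible $\psi \in C^2([0,T);D(A))$ with compact support in $\overline{\mathcal{C}_\Sigma}\times [0,T)$, integrate over $(0,T)\times \mathcal{C}_\Sigma$, and integrate by parts once in time (using $u(\cdot,0)=\ep f$, $\pa_t u(\cdot,0)=\ep g$, and the vanishing of $\psi$ near $t=T$) and once in space (using that $D(A)\subset H^1_0$, so $\psi|_{\pa \mathcal{C}_\Sigma}=0$). Collecting terms yields the desired identity. The main technical hurdle is Step 2: verifying that $u \mapsto |u|^p$ is genuinely locally Lipschitz from $H^1_0$ into $L^2$ on the cone-like domain $\mathcal{C}_\Sigma$. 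This is precisely where the restriction $\gamma \geq (4-N)/2$ (which underlies Lemma \ref{lem:ess.sa} and hence Lemma \ref{embed}) enters the wellposedness theory; once the Sobolev embedding is available, the argument is identical in spirit to the classical one on $\R^N$.
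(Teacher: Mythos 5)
Your proposal is correct and is precisely the ``standard argument'' that the paper itself invokes and then omits: the selfadjoint realization $A$ from Lemma \ref{lem:ess.sa}, the embedding of Lemma \ref{embed} giving local Lipschitz continuity of $u\mapsto|u|^p$ from $H^1_0(\mathcal{C}_\Sigma)$ into $L^2(\mathcal{C}_\Sigma)$ for $p<\tfrac{N}{N-2}$, boundedness of $a$ from \eqref{ass.a(x)}, and a contraction on the Duhamel map. The only cosmetic point is that on the unbounded domain $\mathcal{C}_\Sigma$ the quantity $\|A^{1/2}u\|_{L^2}^2+\|v\|_{L^2}^2$ is not equivalent to the $H^1_0\times L^2$ norm (no Poincar\'e inequality), so one should generate the group from $\bigl(\begin{smallmatrix}0&I\\-(A+I)&0\end{smallmatrix}\bigr)$ and absorb the remaining terms as bounded perturbations.
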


To the end of this subsection we state 
the wellposedness of \eqref{ndw} with 
a singular damping coefficient $V_0|x|^{-1}$ ($V_0> 0$) in $\R^N$. 
The proof of following proposition is given in 
Ikeda--Sobajima \cite{IkedaSobajima1}.
\begin{proposition}
\label{prop:singular}
Let $N\geq 3$, $\tau=1$, $\lambda=1$, $\Sigma=S^{N-1}$, 
$a(x)=V_0|x|^{-1}$ $(V_0\geq 0)$ and 
\begin{align*}
\begin{cases}
1<p<\infty &\text{if}\ N=3,4
\\
1<p<\frac{N-2}{N-4} &\text{if}\ N\geq 5.
\end{cases}
\end{align*}
For every $(f,g)\in H^2(\R^N)\cap H^1(\R^N)$ and $\ep>0$, 
there exist $T=T(\|f\|_{H^2},\|g\|_{H^1},\ep)>0$ and 
a unique strong solution of \eqref{ndw} in the following class:
\[
u\in C^2([0,T];L^2(\R^N))\cap C^1([0,T];H^1(\R^N))\cap C([0,T];H^2(\R^N)).
\]
\end{proposition}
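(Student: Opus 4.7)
The plan is to construct the local strong solution by a fixed-point argument built on the linear damped wave semigroup associated with $\pa_t^2-\Delta+V_0|x|^{-1}\pa_t$ on $\R^N$, and then to treat the nonlinearity $|u|^p$ via Sobolev embeddings and Moser-type product estimates. Because $\Sigma=S^{N-1}$ and $\mathcal{C}_\Sigma=\R^N$, no boundary condition has to be enforced, so the underlying linear operator is simply the free Laplacian perturbed by the dissipative singular multiplication $V_0|x|^{-1}$.

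First I would rewrite \eqref{ndw} as the first-order system $\pa_t U=\mathcal{A}U+(0,|u|^p)^{T}$ with $U=(u,\pa_t u)^{T}$ on $\mathcal{H}_1:=H^1(\R^N)\times L^2(\R^N)$, where formally
\[
\mathcal{A}=\begin{pmatrix}0 & I\\ \Delta & -V_0|x|^{-1}\end{pmatrix}.
\]
Because $N\geq 3$, the classical Hardy inequality $\int_{\R^N}|x|^{-2}|v|^2\,dx\leq (2/(N-2))^2\int_{\R^N}|\nabla v|^2\,dx$ implies that multiplication by $V_0|x|^{-1}$ sends $H^1(\R^N)$ continuously into $L^2(\R^N)$ and is infinitesimally form-bounded with respect to $-\Delta$. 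The resulting energy identity
\[
\tfrac{d}{dt}\bigl(\|\pa_tu\|_2^2+\|\nabla u\|_2^2\bigr)+2V_0\int_{\R^N}|x|^{-1}|\pa_tu|^2\,dx=0
\]
is valid and dissipative, so standard semigroup arguments give that $\mathcal{A}$ generates a $C_0$-semigroup of contractions on $\mathcal{H}_1$.

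Next I would upgrade the linear theory to $\mathcal{H}_2:=H^2(\R^N)\times H^1(\R^N)$: applying $\nabla$ to the equation and using Hardy once more to bound $\||x|^{-1}\pa_tu\|_2\lesssim\|\nabla\pa_tu\|_2$ yields an energy estimate for $(\nabla u,\nabla\pa_tu)$, and $\|\Delta u\|_2$ is recovered a posteriori from the identity $\Delta u=\pa_t^2u+V_0|x|^{-1}\pa_tu-|u|^p$. This produces a linear solution in $C^2([0,T];L^2)\cap C^1([0,T];H^1)\cap C([0,T];H^2)$ whenever the source lies in $C([0,T];H^1)\cap C^1([0,T];L^2)$. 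I would then run a Banach fixed point on
\[
X_T:=\{u\in C([0,T];H^2)\cap C^1([0,T];H^1)\cap C^2([0,T];L^2)\,:\,\|u\|_{X_T}\leq 2\ep M_0\},
\]
$M_0:=\|f\|_{H^2}+\|g\|_{H^1}$, mapping $u$ to the unique linear solution $v$ with source $|u|^p$ and data $(\ep f,\ep g)$. When $N=3,4$ the embedding $H^2\hookrightarrow L^\infty$ makes every $p>1$ admissible; for $N\geq 5$ one has $H^2\hookrightarrow L^{2N/(N-4)}$ and $H^1\hookrightarrow L^{2N/(N-2)}$, and combining them via H\"older gives a Moser-type bound
\[
\|\,|u|^p\,\|_2+\|\nabla(|u|^p)\|_2\lesssim \|u\|_{H^2}^{p}
\]
which is finite precisely when $p<(N-2)/(N-4)$, matching the stated range. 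The same calculation applied to $|u_1|^p-|u_2|^p\lesssim(|u_1|^{p-1}+|u_2|^{p-1})|u_1-u_2|$ then produces a contraction on $X_T$ for $T=T(M_0,\ep)$ sufficiently small.

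The step I expect to be the main obstacle is the $H^2$ closing of the linear energy estimate in the presence of the singular damping $V_0|x|^{-1}$: the commutator of $\nabla$ with multiplication by $|x|^{-1}$ produces a term carrying an additional $|x|^{-2}$ weight on $\pa_tu$, which must be reabsorbed by invoking Hardy one more time. This is precisely where the assumption $N\geq 3$ and the local-in-time character of the statement enter; once that commutator term is under control, the rest of the construction is a standard contraction mapping in $X_T$.
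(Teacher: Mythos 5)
First, a remark on what you are being compared against: the paper does not prove Proposition \ref{prop:singular} at all --- it explicitly defers the proof to the companion paper Ikeda--Sobajima \cite{IkedaSobajima1}. So your proposal can only be judged on its own merits. Your overall architecture (first-order reformulation, Hardy inequality to make sense of the singular damping, contraction mapping, and the Sobolev/Moser bookkeeping identifying $p<\infty$ for $N=3,4$ via $H^2\hookrightarrow L^\infty$ and $p<\tfrac{N-2}{N-4}$ for $N\geq 5$ as exactly the condition for $u\mapsto|u|^p$ to map $H^2$ into $H^1$) is the natural one and the exponent analysis is correct.

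However, the step you yourself flag as the main obstacle is a genuine gap, and the proposed remedy (``invoking Hardy one more time'') does not close it. Differentiating the equation produces the commutator $V_0\nabla(|x|^{-1})\,\pa_t u$, of size $V_0|x|^{-2}|\pa_t u|$. Pairing this with $\nabla\pa_t u$ and trying to absorb it into the dissipation $V_0\int|x|^{-1}|\nabla\pa_t u|^2\,dx$ of the differentiated equation leaves, after Young's inequality, a term $\int|x|^{-3}|\pa_t u|^2\,dx$. The classical Hardy inequality only trades one power of $|x|^{-1}$ for one derivative, and $\pa_t u$ has only one derivative available in your solution class; controlling an $|x|^{-3}$ weight would require either a Rellich-type inequality (unavailable: $\pa_t u\notin H^2$, and the inequality degenerates for $N=3,4$ anyway) or the weighted Hardy inequality $\int|x|^{-3}|w|^2\lesssim\int|x|^{-1}|\nabla w|^2$, whose constant is $4/(N-3)^2$ --- it fails outright for $N=3$ (indeed $|x|^{-3}$ is not locally integrable in $\R^3$, so the left side is generically infinite for $w\in H^1$ not vanishing at the origin), and for $N=4$ it is too weak to be beaten by the dissipation once $V_0$ is not small. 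The standard way out is not to differentiate the equation at all: prove generation of a quasi-contraction $C_0$-semigroup for $\mathcal{A}$ on $\mathcal{H}_1=H^1\times L^2$ by Lumer--Phillips (the range condition reduces, via Lax--Milgram and the form-boundedness of $V_0|x|^{-1}$ guaranteed by Hardy, to inverting $-\Delta+\lambda V_0|x|^{-1}+\lambda^2$), observe that $D(\mathcal{A})=H^2\times H^1$ with graph norm equivalent to the $H^2\times H^1$ norm (Hardy again, plus elliptic regularity), and use the abstract fact that a $C_0$-semigroup restricts to a $C_0$-semigroup on $D(\mathcal{A})$ endowed with the graph norm. Duhamel with source $(0,|u|^p)\in C([0,T];D(\mathcal{A}))$ then yields a classical solution in exactly the stated class, and no commutator ever appears. (A minor additional point: for $1<p<2$ the map $u\mapsto|u|^p$ is not Lipschitz from $H^2$ to $H^1$, so the contraction should be performed in the weaker $C([0,T];H^1\times L^2)$ metric on a closed ball of the strong space.)
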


\subsection{The unified choice of test functions}

Although we already gave the same functions in Section 2, 
we repeat the argument for the reader's convenience. 

Here we fix two kinds of functions $\eta\in C^\infty([0,\infty))$ 
and $\eta^*\in L^\infty((0,\infty))$ 
as follows, which will be used in the 
cut-off functions:
\[
\eta(s)
\begin{cases}
=1& \text{if}\ s\in [0, 1/2]
\\
\text{is decreasing}& \text{if}\ s\in (1/2,1)
\\
=0 & \text{if}\ s\notin [1,\infty), 
\end{cases}
\quad
\eta^*(s)
=
\begin{cases}
0& \text{if}\ s\in [0, 1/2),
\\
\eta(s)& \text{if}\ s\in [1/2,\infty).
\end{cases}
\]
\begin{definition}\label{psi}
For $p>1$, we define for $R>0$, 
\begin{align*}
\psi_R(x,t)
&=
[\eta(s_R(x,t))]^{2p'}, 
\quad 
(x,t)\in \mathcal{C}_\Sigma\times [0,\infty),
\\
\psi_R^*(x,t)
&=
[\eta^*(s_R(x,t))]^{2p'}, 
\quad 
(x,t)\in \mathcal{C}_\Sigma\times [0,\infty).
\end{align*}
with 
\[
s_R(x,t)=R^{-1}
\left(\lr{x}^{2-\alpha}+
t\right),
\]
where $\alpha$ is the constant in \eqref{ass.a(x)}. 
We also set
\[
P(R)=\left\{(x,t)\in \mathcal{C}_{\Sigma}\times [0,\infty)\;;\;
\lr{x}^{2-\alpha}+
t\leq R\right\}.
\]
\end{definition}

\begin{lemma}
Let $\psi_R$ and $\psi_R^*$ be as in Definition \ref{psi}.
Then $\psi_R$ satisfies the following properties:
\begin{itemize}
\item[\bf (i)] If $(x,t)\in P(R/2)$, then $\psi_R(x,t)=1$, 
and if $(x,t)\notin P(R)$, then $\psi_R(x,t)=0$.
\item[\bf (ii)] 
There exists a positive constant $C_1$ such that 
for every $(x,t)\in P(R)$, 
\begin{align*}
|\pa_t \psi_R(x,t)|\leq C_1R^{-1}[\psi_R^*(x,t)]^{\frac{1}{p}}.
\end{align*}
\item[\bf (iii)] 
There exists a positive constant $C_2$ such that 
for every $(x,t)\in P(R)$, 
\begin{align*}
|\pa_t^2 \psi_R(x,t)|
\leq 
C_2R^{-2}[\psi_R^*(x,t)]^{\frac{1}{p}}.
\end{align*}
\item[\bf (iv)] 
There exists a positive constant $C_3$ such that 
for every $(x,t)\in P(R)$, 
\begin{align*}
|\Delta \psi_R(x,t)|\leq C_3R^{-1}\lr{x}^{-\alpha}
[\psi_R^*(x,t)]^{\frac{1}{p}}.
\end{align*}
\end{itemize}
\end{lemma}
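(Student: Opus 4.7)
The plan is to reduce all four properties to direct chain-rule computations on the composition $\psi_R=[\eta\circ s_R]^{2p'}$, using the key geometric fact that $\eta'$ and $\eta''$ vanish outside $[1/2,1]$, precisely the region where $\eta$ coincides with $\eta^*$. This lets me convert the leftover factors $[\eta(s_R)]^k$ coming from the chain rule into $\eta^*(s_R)$, which then collect into $[\psi_R^*]^{1/p}=[\eta^*(s_R)]^{2p'/p}$.

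For (i) I would simply unpack the definitions: on $P(R/2)$ we have $s_R\leq 1/2$, so $\eta(s_R)=1$ and $\psi_R=1$; outside $P(R)$ we have $s_R>1$, so $\eta(s_R)=0$ and $\psi_R=0$.

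For (ii) and (iii), note $\pa_t s_R=R^{-1}$ and $\pa_t^2 s_R=0$. Writing $\phi(s)=\eta(s)^{2p'}$, I get
\[
\pa_t\psi_R=R^{-1}\phi'(s_R)=2p'R^{-1}[\eta(s_R)]^{2p'-1}\eta'(s_R),\qquad \pa_t^2\psi_R=R^{-2}\phi''(s_R),
\]
where $\phi''(s_R)$ expands into a linear combination of $[\eta(s_R)]^{2p'-2}[\eta'(s_R)]^2$ and $[\eta(s_R)]^{2p'-1}\eta''(s_R)$. All these terms are supported in $\{s_R\in[1/2,1]\}$ where $\eta(s_R)=\eta^*(s_R)$, and the only nontrivial check is that the remaining power of $\eta^*(s_R)$ is at least $2p'/p$. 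Since $2p'-1\geq 2p'/p$ reduces to $2\geq 1$ and $2p'-2=2p'/p$ holds with equality, both cases work and yield (ii) and (iii) with explicit constants.

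For (iv) I would expand
\[
\Delta\psi_R=\phi''(s_R)|\nabla s_R|^2+\phi'(s_R)\Delta s_R,\qquad \nabla s_R=(2-\alpha)R^{-1}\lr{x}^{-\alpha}x.
\]
The term $\phi'(s_R)\Delta s_R$ carries the desired weight $R^{-1}\lr{x}^{-\alpha}$ directly, since $\Delta\lr{x}^{2-\alpha}=O(\lr{x}^{-\alpha})$. The term $\phi''(s_R)|\nabla s_R|^2$ carries $R^{-2}\lr{x}^{-2\alpha}|x|^2$, which I would rewrite as $R^{-1}\lr{x}^{-\alpha}\cdot(R^{-1}\lr{x}^{2-\alpha})$; on $\mathrm{supp}\,\psi_R^*\subset P(R)$ the last factor is at most $1$, and combining with the support analysis from (ii)--(iii) yields the claim. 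The main obstacle to watch out for is the exponent bookkeeping $2p'-1,2p'-2\geq 2p'/p$, which turns on $1/p+1/p'=1$; once this is checked, the rest is routine differentiation and tracking of support conditions.
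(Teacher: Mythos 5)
Your proposal is correct and is exactly the ``direct calculation'' that the paper's one-line proof alludes to: the chain rule identity $2p'/p=2p'-2$ together with the fact that $\eta'$, $\eta''$ are supported in $[1/2,1]$ (where $\eta=\eta^*$) handles (ii)--(iii), and the bound $R^{-1}\lr{x}^{2-\alpha}\le 1$ on $P(R)$ correctly absorbs the extra factor in the $|\nabla s_R|^2$ term for (iv). Nothing is missing.
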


\begin{proof}
In view of the definition of $\psi_R$ and $\psi_R^*$, 
the assertions are verified by direct calculations.
\end{proof}

\subsection{Key lemma for estimates of lifespan}

\begin{lemma}\label{key}
Let $\delta>0$, $C_0>0$, $R_1>0$, $\theta\geq 0$ 
and $0\leq w\in L^1_{\rm loc}([0,T);L^1(\mathcal{C}_\Sigma))$
for $T>R_1$. Assume that 
for every $R\in [R_1,T)$, 
\begin{align}\label{criterion}
\delta 
+
\iint_{P(R)}
  w(x,t)\psi_R(x,t)
\,dx\,dt
\leq 
C_0R^{-\frac{\theta}{p'}}
\left(
\iint_{P(R)}
  w(x,t)\psi_R^*(x,t)
\,dx\,dt
\right)^{\frac{1}{p}}.
\end{align}
Then $T$ has to be bounded above as follows:
\begin{align*}
T\leq 
\begin{cases}
\left(R_1^{(p-1)\theta}+(\log 2)C_0^p\theta \delta^{-(p-1)}\right)^{\frac{1}{(p-1)\theta}}
&\text{if}\ \theta>0,
\\[5pt]
\exp\left(\log R_1+(\log 2)(p-1)^{-1}C_0^p\delta^{-(p-1)}\right)
&\text{if}\ \theta=0.
\end{cases}
\end{align*}
\end{lemma}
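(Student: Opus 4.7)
The plan is to reduce the integral inequality \eqref{criterion} to a separable Bernoulli-type ordinary differential inequality in a well-chosen auxiliary function, generalising the trick already used in Section~2. I introduce
\[
y(r) := \iint_{P(r)} w(x,t)\psi_r^*(x,t)\,dx\,dt, \qquad Y(R) := \int_0^R y(r)\,\frac{dr}{r},
\]
noting that $y$ is locally bounded (since $w\in L^1_{\rm loc}$ and $\psi_r^*$ is supported in $P(r)$) and $Y\in C^1((0,T))$ with $R\,Y'(R)=y(R)$. The integral defining $Y$ converges because, for each fixed $(x,t)$, $\eta^*(s_r(x,t))\neq 0$ only for $r$ in the bounded interval $[\lr{x}^{2-\alpha}+t,\,2(\lr{x}^{2-\alpha}+t)]$.

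The heart of the argument is the pointwise bound
\[
\int_\sigma^\infty [\eta^*(s)]^{2p'}\,\frac{ds}{s}\leq (\log 2)\,[\eta(\sigma)]^{2p'},\qquad \sigma\geq 0,
\]
which I verify case by case: if $\sigma\leq 1/2$ then the right-hand side equals $\log 2$ while the left-hand side is at most $\int_{1/2}^1 ds/s=\log 2$; if $\sigma\in(1/2,1)$, monotonicity of $\eta$ lets me pull $[\eta(\sigma)]^{2p'}$ outside the integral over $[\sigma,1]$, whose logarithmic length is at most $\log 2$; if $\sigma\geq 1$ both sides vanish. Applying Fubini--Tonelli to $Y(R)$ and the change of variables $s=(\lr{x}^{2-\alpha}+t)/r$ (so that $dr/r=-ds/s$) rewrites the inner integral in $r$ as $\int_{s_R(x,t)}^\infty[\eta^*(s)]^{2p'}\,ds/s$, and the lemma above yields
\[
Y(R)\leq (\log 2)\iint_{P(R)} w(x,t)\psi_R(x,t)\,dx\,dt.
\]
Inserting this into \eqref{criterion}, raising to the $p$-th power, and using $R\,Y'(R)=y(R)$ produces the separable differential inequality
\[
\Bigl(\delta+\tfrac{Y(R)}{\log 2}\Bigr)^{\!p}\leq C_0^p\,R^{\,1-\theta(p-1)}\,Y'(R),\qquad R\in[R_1,T).
\]

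What remains is routine calculus. Dividing by $(\delta+Y/\log 2)^p$ and integrating over $[R_1,T)$, the left-hand side has primitive $-\frac{\log 2}{p-1}(\delta+Y/\log 2)^{-(p-1)}$, hence is bounded above by $\frac{\log 2}{p-1}\delta^{-(p-1)}$ independently of $T$; the right-hand side equals $C_0^{-p}$ times $(\theta(p-1))^{-1}(T^{\theta(p-1)}-R_1^{\theta(p-1)})$ when $\theta>0$, and $C_0^{-p}\log(T/R_1)$ when $\theta=0$. Solving for $T$ in each case gives exactly the two displayed bounds. The main obstacle is thus the clean verification of the pointwise estimate for $\eta^*$ together with the Fubini/change-of-variable step; once these are established, the remainder is a direct comparison argument with no further analytical subtleties.
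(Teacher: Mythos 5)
Your proposal is correct and follows essentially the same route as the paper: the same auxiliary functions $y$ and $Y$, the same Fubini/change-of-variable step, the same pointwise bound $\int_\sigma^\infty[\eta^*(s)]^{2p'}s^{-1}\,ds\leq(\log 2)[\eta(\sigma)]^{2p'}$, and the same Bernoulli-type differential inequality. The only cosmetic difference is that the paper first substitutes $Y(R)=Z\bigl(\int_{R_1}^R r^{(p-1)\theta-1}\,dr\bigr)$ before integrating, whereas you integrate the separable inequality directly; this is the same calculus and yields the identical bounds.
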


Although the upper bound of $T$ for $\theta>0$ 
can be verified by the simple way via Young inequality, 
we give a proof different from that via 
a different view point. 
This view point enables us to treat 
not only the subcritical case $\theta>0$ but also the critical case $\theta=0$. 

\begin{proof}[Proof of Lemma \ref{key}]
%

We define 
\begin{align*}
y(r):=
\iint_{P(r)}
  w(x,t)\psi_r^*(x,t)
\,dx\,dt, \quad r\in (0,T), 
\end{align*}
Then we have 
\begin{align*}
\int_{0}^{R}
y(r)
r^{-1}\,dr
&=
\int_{0}^{R}
\left(
\iint_{P(R)}
  w(x,t)
  \left[
  \eta^*\left(s_r(x,t)\right)
  \right]^{2p'}
\,dx\,dt
\right)
r^{-1}\,dr
\\
&=
\iint_{P(R)}
  w(x,t)
\left(
\int_{0}^{R}
  \left[
  \eta^*\left(\frac{\lr{x}^{2-\alpha}+t}{r}\right)
  \right]^{2p'}
r^{-1}\,dr
\right)
\,dx
\,dt
\\
&=
\iint_{P(R)}
  w(x,t)
\left(
\int_{(|x|^2+t)/R}^{\infty}
  \left[
  \eta^*\left(s\right)
  \right]^{2p'}
s^{-1}\,ds
\right)
\,dx
\,dt.
\end{align*}
On the other hand, by the definition of $\eta$ and $\eta^*$, 
for every $\sigma\geq 1$, 
\begin{align*}
\int_{\sigma}^{\infty}
  \left[
  \eta^*\left(s\right)
  \right]^{2p'}
s^{-1}\,ds=0, \quad 
\end{align*}
and for every $\sigma\in (0,1)$
\begin{align*}
\int_{\sigma}^{\infty}
  \left[
  \eta^*\left(s\right)
  \right]^{2p'}
s^{-1}\,ds
&=
\int_{\max\{1/2,\sigma\}}^{1}
  \left[
  \eta\left(s\right)
  \right]^{2p'}
s^{-1}\,ds
\\
&=
  \left[
  \eta\left(\sigma\right)
  \right]^{2p'}
\int_{1/2}^{1}
s^{-1}\,ds
\\
&=
  (\log 2)\left[
  \eta\left(\sigma\right)
  \right]^{2p'},
\end{align*}
where we have used the non-increasing property of $\eta$. 
Therefore we deduce from \eqref{criterion} that 
for $R\in  (R_1,T)$, 
\begin{align*}
\delta +\frac{1}{\log 2}
\int_{0}^R y(r)r^{-1}\,dr
&\leq
\delta
+\iint_{P(R)}
  w\psi_R
\,dx\,dt
\\
&\leq 
C_0
R^{-\frac{\theta}{p'}}\left(
\iint_{P(R)}
  w\psi_R^*
\,dx\,dt
\right)^{\frac{1}{p}}
\\
&\leq 
C_0
R^{-\frac{\theta}{p'}}
\left(
y(R)
\right)^{\frac{1}{p}}.
\end{align*}
Taking 
\[
Y(R)=
\int_{0}^R y(r)r^{-1}\,dr, \quad \rho\in (R_1, T), 
\]
we have 
\[
\Big((\log 2)\delta+Y(R)\Big)^p
\leq (\log 2)^pC_1^pR^{1-(p-1)\theta}Y'(R). 
\]
Taking 
\[
Y(R)=Z\left(\int_{R_1}^R r^{(p-1)\theta-1}\,dr\right), 
\quad 
0<\rho<\rho_T=\int_{R_1}^T r^{(p-1)\theta-1}\,dr.
\]
This gives 
\begin{equation}
\label{odi}
\frac{d}{d\rho}\Big((\log 2)\delta+Z(\rho)\Big)^{1-p}
\leq -(p-1)(\log 2)^{-p}C_1^{-p}, 
\quad 
\rho\in (0,\rho_T).
\end{equation}
Integrating it over $[\rho_1,\rho_2]\subset (0,\rho_T)$, we have
\begin{align}\label{rho12}
\Big((\log 2)\delta+Z(\rho_2)\Big)^{1-p}
&\leq 
\Big((\log 2)\delta+Z(\rho_1)\Big)^{1-p}
-(p-1)(\log 2)^{-p}C_1^{-p}(\rho_2-\rho_1).
\end{align}
Then we obtain
\[
\rho_2
<
\rho_1+(p-1)^{-1}(\log 2)C_1^p\delta^{-(p-1)}.
\]
Letting $\rho_2 \uparrow \rho_T$ and $\rho_1 \downarrow 0$, 
we find 
\[
\int_{R_1}^T r^{(p-1)\theta-1}\,dr
\leq 
(p-1)^{-1}(\log 2)C_1^p\delta^{-(p-1)}.
\]
This is nothing but the desired upper bound of $T$.
\end{proof}
\begin{remark}\label{rem:key}
The crucial idea in the present paper is 
to regard the inequality \eqref{criterion} as 
a differential inequality of $y(r)$ or $Y(R)$ in the proof. 
This idea with the choice of cut off functions in Definition \ref{psi} 
enable us to treat not only the case $\theta>0$ but the critical case $\theta=0$.  
We can find not only the upper bound of $T$ (which will be the lifespan) but also 
a lower estimate for $Y(R)$ by using \eqref{rho12}. 
\end{remark}

\section{Blowup phenomena and upper bound of lifespan for several equations}

In this section we prove blowup phenomena 
for several equations which can be written by the form \eqref{ndw}.
To simplify the situation, we split 
the case of the problem with $\tau=0$ and that with $\tau=1$. 
\begin{definition}\label{def:lifespan}
We denote $\lifespan(u)$ as the maximal existence time of solutions to \eqref{ndw}
in the sense of Propositions \ref{wellposed1}, \ref{wellposed2} and \ref{prop:singular}, 
respectively. Namely, 
\[
\lifespan(u)=
\sup \{T>0\;;\;\text{$u$ is a unique weak (strong) solution of \eqref{ndw} in $[0,T)$}\}. 
\]
\end{definition}
The statements of the main results are the following: 
\begin{theorem}\label{main0}
Assume that $\tau=0$, $a(x)=e^{i\zeta}$, $\zeta\in [-\pi/2,\pi/2]$ 
and $\gamma\in \{0,1\}\cup[\frac{4-N}{2},\infty)$ and  
$1<p<\frac{N}{N-2}$. 
Let $u$ be the unique solution of 
\eqref{ndw} with $f\in H_0^1(\mathcal{C}_{\Sigma})$ satisfying $f \Phi\in L^1(\mathcal{C}_{\Sigma})$
and 
\[
\int_{\mathcal{C}_{\Sigma}}
  f(x)\Phi(x)
\,dx\notin\{-\rho\lambda e^{-i\zeta}\in \C\;;\;\rho\geq 0\}.
\]
If $1<p\leq 1+\frac{2}{N+\gamma}$, 
then $\lifespan(u)<\infty$. 
Moreover one has 
\begin{align*}
\lifespan(u)
\leq 
\begin{cases}
\exp\Big(C\ep^{-(p-1)}\Big)
&\text{if}\ p=1+\frac{2}{N+\gamma},
\\
C\ep^{-\left(\frac{1}{p-1}-\frac{N+\gamma}{2}\right)^{-1}}
&\text{if}\ 1<p<1+\frac{2}{N+\gamma}.
\end{cases}
\end{align*}
\end{theorem}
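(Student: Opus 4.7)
The plan is to apply the key Lemma \ref{key} with $w = |u|^p \Phi$ and an appropriate $\delta > 0$, after deriving the criterion \eqref{criterion} by the test function method in the weak formulation of Proposition \ref{wellposed1}. As test function I would take $\psi(x,t) = \Phi(x)\psi_R(x,t)$, where $\Phi$ is the positive harmonic function from Lemma \ref{harmonics} (so that $\psi$ vanishes on $\partial\mathcal{C}_\Sigma$), and $\psi_R$ is the cutoff from Definition \ref{psi} with $\alpha = 0$ (valid since $|a(x)| = 1$).

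The key integration by parts exploits $\Delta\Phi = 0$: one has
\[
\int_0^T\!\!\int_{\mathcal{C}_\Sigma}\nabla u \cdot \nabla(\Phi\psi_R)\,dx\,dt
= -\iint_{P(R)} u\bigl(\Phi\Delta\psi_R + 2\nabla\Phi\cdot\nabla\psi_R\bigr)\,dx\,dt.
\]
The crucial observation is that $\psi_R$ is radial in $x$, so $\nabla_x\psi_R$ is purely radial, and by the Euler identity $x\cdot\nabla\Phi = \gamma\Phi$ of Lemma \ref{harmonics}, the cross term becomes
\[
\nabla\Phi\cdot\nabla\psi_R = (\partial_r\Phi)(\partial_r\psi_R) = \frac{\gamma}{r}\Phi\,\partial_r\psi_R,
\]
carrying the factor $\Phi$ itself (not $|\nabla\Phi|$, whose tangential component via $\nabla_\omega\varphi_\Sigma$ could misbehave near $\partial\Sigma$). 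Together with the derivative bounds of Section 3.2 and $\alpha = 0$, this gives
$|\Delta(\Phi\psi_R)| + |\partial_t(\Phi\psi_R)| \leq CR^{-1}\Phi(\psi_R^*)^{1/p}$.

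The hypothesis on $\int f\Phi$ is precisely the statement that there exists $\theta \in \mathbb{R}$ with $\mu := \mathrm{Re}(e^{i\theta}\lambda e^{-i\zeta}) > 0$ and $\delta_0 := \mathrm{Re}(e^{i\theta}\int f\Phi) > 0$ (the forbidden ray is exactly the set for which no such separating angle exists). Dividing the weak identity by $e^{i\zeta}$, multiplying by $e^{i\theta}$, and taking real parts yields
\[
\varepsilon\,\mathrm{Re}\Big(e^{i\theta}\!\!\int_{\mathcal{C}_\Sigma} f\Phi\,\psi_R(\cdot,0)\,dx\Big) + \mu\iint_{P(R)}|u|^p\Phi\psi_R\,dx\,dt \leq CR^{-1}\iint_{P(R)}|u|\Phi(\psi_R^*)^{1/p}\,dx\,dt.
\]
For $R \geq R_1$ large, $f\Phi \in L^1$ and dominated convergence give the initial term at least $\varepsilon\delta_0/2$. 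Hölder's inequality with conjugate exponents $p, p'$ and the support bound $\iint\Phi\,\mathbf{1}_{\mathrm{supp}\,\psi_R^*} \leq CR^{1+(N+\gamma)/2}$ (using $\lr{x} \leq R^{1/2}$ and $t \leq R$ on the support, together with $\Phi(x) \leq C|x|^\gamma$) then bounds the right-hand side by $CR^{-\theta/p'}(\iint|u|^p\Phi\psi_R^*)^{1/p}$ with $\theta = 1/(p-1) - (N+\gamma)/2$. This is exactly \eqref{criterion}.

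Applying Lemma \ref{key} with $w = |u|^p\Phi$ and $\delta = \varepsilon\delta_0/(2\mu)$ then produces the claimed bounds: $T \leq C\varepsilon^{-1/\theta} = C\varepsilon^{-[1/(p-1)-(N+\gamma)/2]^{-1}}$ when $\theta > 0$, and $T \leq \exp(C\varepsilon^{-(p-1)})$ when $\theta = 0$. The main delicate point is to justify rigorously the use of $\Phi\psi_R$ as an admissible test function in Proposition \ref{wellposed1}, since $\Phi$ is non-smooth at $x = 0$ when $\gamma$ is small; this requires a density/approximation argument with functions in $D(A_{\min})$, which is enabled by $\gamma \geq (4-N)/2$ (matching the essential self-adjointness condition of Lemma \ref{lem:ess.sa}), together with the exceptional cases $\gamma \in \{0, 1\}$ corresponding to $N = 1$ where $\Phi \in \{1, x\}$ is trivially smooth.
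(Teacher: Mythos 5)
Your proposal is correct and follows essentially the same route as the paper: the test function $\Phi(x)\psi_R(x,t)$ with $\alpha=0$, the rotation argument extracting positivity of both the data term and the coefficient of the nonlinearity from the non-antipodality hypothesis, the H\"older estimate with the volume bound $\iint_{P(R)}\Phi\,dx\,dt\lesssim R^{1+(N+\gamma)/2}$ yielding $\theta=\frac{1}{p-1}-\frac{N+\gamma}{2}$, and the application of Lemma \ref{key} with $w=|u|^p\Phi$. Your explicit treatment of the cross term via $x\cdot\nabla\Phi=\gamma\Phi$ and of the admissibility of $\Phi\psi_R$ as a test function is in fact slightly more careful than the paper's own writeup of this theorem (the paper carries out that computation only in the proof of Theorem \ref{main1}).
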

\begin{theorem}\label{main1}
Assume that $\tau=1$, $\lambda=1$ and $\gamma\in \{0,1\}\cup[\frac{4-N}{2},\infty)$. Then 
for $1<p<\frac{N}{N-2}$ and 
for $(f,g)\in H_0^1(\mathcal{C}_{\Sigma})\times L^2(\mathcal{C}_{\Sigma})$. 
Let $a(x)$ be real-valued with \eqref{ass.a(x)} 
and let $u$ be the unique solution of \eqref{ndw} 
in Propositions \ref{wellposed2}. 
Further assume that $g\Phi,f\Phi\in L^1(\mathcal{C}_{\Sigma})$ with 
\[
\int_{\mathcal{C}_{\Sigma}}
  \Big(g(x)+a(x)f(x)\Big)
\Phi(x)\,dx>0.
\]
If $1<p\leq 1+\frac{2}{N+\gamma}$, 
then $\lifespan(u)<\infty$. 
Moreover, there exists a constant $\ep_0>0$ such that 
for every $0<\ep\leq \ep_0$
\begin{align*}
\lifespan(u)
\leq 
\begin{cases}
\exp\Big(C\ep^{-(p-1)}\Big)
&\text{if}\ p=1+\frac{2}{N+\gamma-\alpha},
\\
C\ep^{-\frac{2-\alpha}{2}\left(\frac{1}{p-1}-\frac{N+\gamma-\alpha}{2}\right)^{-1}}
&\text{if}\ 1+\frac{\alpha}{N+\gamma-\alpha}<p<1+\frac{2}{N+\gamma-\alpha},
\\
C_\delta\ep^{-(p-1)-\delta}\ (\forall\delta>0)
&\text{if}\ p=1+\frac{2}{N+\gamma-\alpha},
\\
C\ep^{-(p-1)}
&\text{if}\ 1<p<1+\frac{2}{N+\gamma-\alpha}.
\end{cases}
\end{align*}
If $u$ be the unique solution of \eqref{ndw} in Proposition \ref{prop:singular} 
(for $N\geq 3$ and $a(x)=V_0|x|^{-1}$ in $\R^N$), then 
Moreover, there exists a constant $\ep_0>0$ such that 
for every $0<\ep\leq \ep_0$
\begin{align*}
\lifespan(u)
\leq 
\begin{cases}
\exp\Big(C\ep^{-(p-1)}\Big)
&\text{if}\ p=\frac{N+1}{N-1},
\\
C\ep^{-\frac{2-\alpha}{2}\left(\frac{1}{p-1}-\frac{N-1}{2}\right)^{-1}}
&\text{if}\ \frac{N}{N-1}<p<\frac{N+1}{N-1}.
\end{cases}
\end{align*}
\end{theorem}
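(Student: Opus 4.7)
My plan is to apply the weighted test function $\Phi(x)\psi_R(x,t)$ in the weak formulation of Proposition~\ref{wellposed2}, derive an inequality of the type required by Lemma~\ref{key} with $w(x,t)=|u(x,t)|^p\Phi(x)$, and then read off the lifespan bound from that lemma. For $R_1$ large and $R\in[R_1,T_\ep)$, $\Phi\psi_R$ is admissible because $\Phi$ vanishes on $\pa\mathcal{C}_\Sigma$ by Lemma~\ref{harmonics} and $\psi_R$ has compact support on $\mathcal{C}_\Sigma\times[0,R]$. Integrating by parts twice in $t$ for $\pa_t^2u$, twice in $x$ for $\Delta u$, and once in $t$ for $a(x)\pa_tu$, and using $\Delta\Phi=0$ from Lemma~\ref{harmonics}, produces an identity equating $\iint_{P(R)}|u|^p\Phi\psi_R\,dx\,dt$ plus the initial-data contribution $\ep\int_{\mathcal{C}_\Sigma}(g+af)\Phi\psi_R(x,0)\,dx$ to four error integrals involving $u$ multiplied by one of $\Phi\pa_t^2\psi_R$, $\Phi\Delta\psi_R$, $\nabla\Phi\cdot\nabla\psi_R$, or $a\Phi\pa_t\psi_R$.

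The next step is to bound each error integrand pointwise. For the cross term, the observation that $\psi_R$ depends on $x$ only through $\lr{x}^{2-\alpha}$ (so $\nabla\psi_R$ is radial) combined with $x\cdot\nabla\Phi=\gamma\Phi$ from Lemma~\ref{harmonics} yields
\[
|\nabla\Phi\cdot\nabla\psi_R|=\gamma\frac{\Phi}{|x|^2}|x\cdot\nabla\psi_R|\le C\Phi\lr{x}^{-\alpha}R^{-1}[\psi_R^*]^{1/p}.
\]
The $\Phi\Delta\psi_R$ error obeys the same bound by estimate~(iv) of Section~3.2, the $a\Phi\pa_t\psi_R$ error by \eqref{ass.a(x)} combined with~(ii), and the $\Phi\pa_t^2\psi_R$ error is an order better in $R$ (estimate~(iii)) and can be absorbed. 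Applying H\"older's inequality with the splitting $(|u|^p\Phi\psi_R^*)^{1/p}\cdot(\Phi\lr{x}^{-\alpha p'})^{1/p'}$ and the volume bound $\iint_{P(R)}\Phi\lr{x}^{-\alpha p'}\,dx\,dt\lesssim R^{1+(N+\gamma-\alpha p')/(2-\alpha)}$ then gives
\[
\delta+\iint_{P(R)}|u|^p\Phi\psi_R\,dx\,dt\le CR^{-\theta/p'}\Bigl(\iint_{P(R)}|u|^p\Phi\psi_R^*\,dx\,dt\Bigr)^{1/p}
\]
with $\theta=(2-(N+\gamma-\alpha)(p-1))/((p-1)(2-\alpha))\ge 0$ for $p\le 1+2/(N+\gamma-\alpha)$ and $\theta=0$ at criticality.

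The constant $\delta>0$ is produced from the positivity hypothesis $\int_{\mathcal{C}_\Sigma}(g+af)\Phi\,dx>0$: since $\psi_R(\cdot,0)\to 1$ pointwise and $\pa_t\psi_R(\cdot,0)$ is supported on $\{\lr{x}^{2-\alpha}\ge R/2\}$ with $\|\pa_t\psi_R(\cdot,0)\|_\infty=O(R^{-1})$, dominated convergence combined with $f\Phi,g\Phi\in L^1(\mathcal{C}_\Sigma)$ yields the effective lower bound $\delta=\tfrac{\ep}{2}\int_{\mathcal{C}_\Sigma}(g+af)\Phi\,dx$ for all $R\ge R_1(\ep)$. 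Lemma~\ref{key} then produces the exponential bound $\exp(C\ep^{-(p-1)})$ at the critical exponent and the polynomial bound $\ep^{-1/\theta}=\ep^{-(2-\alpha)/2\cdot(1/(p-1)-(N+\gamma-\alpha)/2)^{-1}}$ in the subcritical range. For the singular damping case ($N\ge 3$, $a(x)=V_0|x|^{-1}$ in $\R^N$, $\Sigma=S^{N-1}$) I would repeat the argument with $\Phi\equiv 1$, $\gamma=0$, and $|a(x)|\le V_0|x|^{-1}$ in place of $\lr{x}^{-\alpha}$, using the strong solution framework of Proposition~\ref{prop:singular} for admissibility; here the volume integral $\iint_{P(R)}|x|^{-p'}\,dx\,dt\lesssim R^{N+1-p'}$ is finite near the origin precisely when $p>N/(N-1)$, which explains the lower constraint in the statement. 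The main obstacle is to arrange the $\lr{x}^{2-\alpha}+t$ scaling of Definition~\ref{psi} so that all four error terms receive a common $R^{-1}\lr{x}^{-\alpha}$ factor after integration by parts; once that balance is secured, the differential-inequality machinery of Lemma~\ref{key} closes the argument uniformly across the subcritical and critical regimes.
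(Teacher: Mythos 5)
Your proposal follows the paper's proof essentially step for step: the test function $\Phi\psi_R$, the integration by parts using $\Delta\Phi=0$, the cross-term estimate via the radiality of $\nabla\psi_R$ together with $x\cdot\nabla\Phi=\gamma\Phi$, the absorption of the $\Phi\pa_t^2\psi_R$ term (valid since $R^{-1}\leq \lr{x}^{-\alpha}$ on $P(R)$ for $\alpha\leq 1$), the extraction of $\delta=c_0\ep$ from the sign condition on $\int(g+af)\Phi\,dx$, the appeal to Lemma \ref{key} with $w=|u|^p\Phi$, and the reduction of the singular case to $\gamma=0$, $\alpha=1$ with the restriction $p>\frac{N}{N-1}$ coming from the local integrability of $|x|^{-p'}$. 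All of this is correct and is exactly what the paper does.

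There is one genuine gap: your volume bound $\iint_{P(R)}\Phi\lr{x}^{-\alpha p'}\,dx\,dt\lesssim R^{1+(N+\gamma-\alpha p')/(2-\alpha)}$ is only valid when $\alpha p'<N+\gamma$, i.e.\ when $p>1+\frac{\alpha}{N+\gamma-\alpha}$, because only then does the radial integral $\int_0^{R^{1/(2-\alpha)}}(1+r^2)^{-\alpha p'/2}r^{N+\gamma-1}\,dr$ grow like $R^{(N+\gamma-\alpha p')/(2-\alpha)}$. When $\alpha p'>N+\gamma$ that integral converges and the correct bound is $CR$ (and $CR\log R$ at equality), which is \emph{larger} than your claimed bound, so your inequality is false in that regime and the resulting $\theta$ is wrong. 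This is precisely where the third and fourth cases of the theorem ($C_\delta\ep^{-(p-1)-\delta}$ when $p=1+\frac{\alpha}{N+\gamma-\alpha}$ and $C\ep^{-(p-1)}$ when $p<1+\frac{\alpha}{N+\gamma-\alpha}$) come from in the paper: one reruns the H\"older step with $q(R)=R^{-\frac{1}{p-1}}\log R$ or $q(R)=R^{-\frac{1}{p-1}}$ respectively and feeds the corresponding $\theta$ into Lemma \ref{key}. Your write-up covers only the two upper cases of the four-case lifespan estimate; to complete the proof you need the case distinction on the spatial integral according to the sign of $N+\gamma-\alpha p'$.
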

\begin{remark}
Since 
the upper bounds in Theorem \ref{main0} with $\mathcal{C}_{\Sigma}=\R^N$
coincides with that in Lee--Ni \cite{LN92} when we consider 
the nonlinear heat equation of Fujita-type 
and 
the upper bounds in Theorem \ref{main1} with $\mathcal{C}_{\Sigma}=\R^N$
matches that in 
Li--Zhou \cite{LZ95}, Nishihara \cite{Nishihara03Ibaraki}
and also Lai--Zhou \cite{LZarxiv}. 
Moreover, Theorems \ref{main0} and \ref{main1} 
give the lifespan of solutions even when 
the equation in the cone-like domain $\mathcal{C}_{\Sigma}$ 
has a critical nonlinearity which depends on the shape of $\Sigma$. 
In particular, we could obtain the 
the lifespan of solutions to nonlinear Schr\"odinger equation in $\R^N$
with the critical nonlinearity $p=p_F$. 
\end{remark}
\subsection{Proof of Theorem \ref{main0}}
We remark that the solution 
$u\in 
C([0,T);H^1(\mathcal{C}_{\Sigma}))\cap 
C([0,T);H^1(\mathcal{C}_{\Sigma}))
$ satisfies
\begin{align*}
&
\ep 
e^{i\zeta}\int_{\mathcal{C}_{\Sigma}}
  f\psi(0)
\,dx
+
\lambda
\int_0^T
\int_{\mathcal{C}_{\Sigma}}
  |u(t)|^p\psi(t)
\,dx
\,dt
\\
&
=\int_0^T
\int_{\mathcal{C}_{\Sigma}}
  \Big(\nabla u(t)\cdot\nabla \psi(t)
   -e^{i\zeta} u(t)\pa_t\psi(t)
  \Big)
\,dx
\,dt.
\end{align*}
Fix $\xi\in (-\frac{\pi}{2},\frac{\pi}{2})$ such that 
\[
{\rm Im}
\left(
e^{i(\xi+\eta)}\lambda^{-1}\int_{\mathcal{C}_{\Sigma}}f(x)\Phi(x)\,dx
\right)>0.
\]
Multiplying $\mu=\lambda^{-1}e^{i\xi}$ 
with $\xi\in (-\pi/2,\pi/2)$, we see 
that  
\begin{align}
\nonumber
&\ep 
\lambda^{-1}e^{i(\zeta+\xi)}\int_{\mathcal{C}_{\Sigma}}
  f\psi(0)
\,dx
+
e^{i\xi}
\int_0^T
\int_{\mathcal{C}_{\Sigma}}
  |u(t)|^p\psi(t)
\,dx
\,dt
\\
\label{eq:psi}
&
=
-\mu 
\int_0^T
\int_{\mathcal{C}_{\Sigma}}
  u(t)\Big(\Delta \psi(t)
   +e^{i\zeta}\pa_t\psi(t)
  \Big)
\,dx
\,dt,
\end{align}
where we used integration by parts which is verified 
by the regularity of test function $\psi(s)\in D(A)$. 

Here we choose $\psi(x,t)=\Phi(x)\psi_R(x,t)$ with $\alpha=0$. 
Since $\psi(x,t)=0$ on 
$(\pa\mathcal{C}_{\Sigma}\setminus\{0\})\times (0,\infty)$ and 
\[
\Delta\psi(x,t)=2\nabla \Phi(x)\cdot\nabla\psi_R(x,t)
+\Phi(x)\Delta \psi_R(x,t)
\] is a compactly supported 
bounded function, this choice is reasonable. Noting that 
\begin{align*}
\lim_{R\to \infty}
\left(
\int_{\mathcal{C}_{\Sigma}}
  f(x)\Phi(x)\psi_R(x,0)
\,dx
\right)
=
\int_{\mathcal{C}_{\Sigma}}
  f(x)\Phi(x)
\,dx, 
\end{align*}
we can choose $R_0>0$ and $c_0>0$ such that for every $R\geq R_0$, 
\[
{\rm Re}
\left(
e^{i(\xi+\eta)}\lambda^{-1}\int_{\mathcal{C}_{\Sigma}}f(x)\Phi(x)\psi_R(x,0)\,dx
\right)\geq c_0>0.
\]
Now we assume $R_0<\lifespan(u)$. 
Taking real part of 
\eqref{eq:psi}, we have
for $R\in (R_0,\lifespan(u))$, 
\begin{align*}
c_0
\ep 
+
\cos\xi\iint_{P(R)}
  |u(t)|^p\Phi\psi_R(t)
\,dx
\,dt
&\leq 
\iint_{P(R)}
  |u(s)|\Big(|\Delta \psi(t)|
   +|\pa_t\psi(t)|
  \Big)
\,dx
\,dt
\\
&\leq 
\frac{C}{R}\iint_{P(R)}
  |u(t)|\Phi
  [\psi^*_R(t)]^{\frac{1}{p}}
\,dx
\,dt
\\
&\leq 
\frac{C}{R}
\left(
\iint_{P(R)}
  \Phi
\,dx
\,dt
\right)^{\frac{1}{p'}}
\left(
\iint_{P(R)}
  |u(t)|^p
  \Phi\psi^*_R(t)
\,dx
\,dt
\right)^{\frac{1}{p}}
\\
&\leq 
C'
R^{-\frac{\theta}{p'}}
\left(
\iint_{P(R)}
  |u(t)|^p\Phi
  \psi^*_R(t)
\,dx
\,dt
\right)^{\frac{1}{p}}
\end{align*}
with 
\[
\theta=\frac{1}{p-1}-\frac{N+\gamma}{2}.
\]
Therefore applying Lemma \ref{key} with $w=|u|^p\Phi$, 
we have the desired upper bound of $\lifespan(u)$. 
\qed
\subsection{Proof of Theorem \ref{main1}}
Note that the solution $u$ satisfies 
\[
u\in C([0,T);H_0^1(\mathcal{C}_{\Sigma}))
\cap 
C^1([0,T);L^2(\mathcal{C}_{\Sigma}))
\cap 
L^p_{\rm loc}(\overline{\mathcal{C}_{\Sigma}}\times [0,T))
\]
with $u(x,0)=\ep f(x)$ and
for every $\psi\in C^2([0,T);D(A))$ with 
${\rm supp}\,\psi\subset\subset \overline{\mathcal{C}_{\Sigma}}\times [0,T)$ 
\begin{align*}
&
\ep 
\int_{\mathcal{C}_{\Sigma}}
  g\psi(0)  
\,dx
+\int_0^T
\int_{\mathcal{C}_{\Sigma}}
  |u(t)|^p\psi(t)
\,dx
\,dt
\\&
=
\int_0^T
\int_{\mathcal{C}_{\Sigma}}
  \Big(
  \nabla u(t)\cdot\nabla \psi(t)
  -\pa_t u(t)\pa_t\psi(t)
  +a\pa_t u(t)\psi(t)\Big)
\,dx
\,dt.
\end{align*}
By integration by parts for variable $x$ and $t$, 
we have 
\begin{align*}
&\ep 
\left(
\int_{\mathcal{C}_{\Sigma}}
  g\psi(0) 
  -f\pa_t\psi(0)
  +af\psi(0)
\,dx
\right)
+
\int_0^T
\int_{\mathcal{C}_{\Sigma}}
  |u(x,t)|^p\psi(x,t)
\,dx
\,dt
\\
&=
\int_0^T
\int_{\mathcal{C}_{\Sigma}}
u(t)
\left(\pa_t^2\psi(t)-\Delta \psi(t)-a\pa_t\psi(t)\right)
\,dx
\,dt
\end{align*}
Here noting that 
\begin{align*}
\lim_{R\to\infty}
\left(
\int_{\mathcal{C}_{\Sigma}}
  \Big(g\psi_R(0) 
  -f\pa_t\psi_R(0)
  +af\psi_R(0)\Big)
\Phi\,dx
\right)
=
\int_{\mathcal{C}_{\Sigma}}
  \Big(g+af\Big)
\Phi\,dx>0, 
\end{align*}
Then 
we see that there exist $R_0>0$ and $c_0>0$ such that 
for every $R\geq R_0$,
\[
\int_{\mathcal{C}_{\Sigma}}
  \Big(g\psi_R(0) 
  -f\pa_t\psi_R(0)
  +af\psi_R(0)\Big)
\Phi\,dx\geq c_0.
\]
Now we assume that $\lifespan(u)>R_0$. 
Since $\Phi$ is independent of $t$, it follows from 
Lemmas \ref{harmonics} and \ref{psi} that 
\begin{align*}
&\pa_t^2(\Phi\psi_R)-\Delta (\Phi\psi_R)-\pa_t(a(x)\Phi\psi_R)
\\
&=
\Phi\pa_t^2\psi_R
-2\nabla \Phi\cdot\nabla \psi_R
-\Phi\Delta\psi_R
-a(x)\Phi\pa_t\psi_R
\\
&
\leq 
\frac{C_2}{R^2}\Phi[\psi_R^*]^{\frac{1}{p}} 
+
\frac{4p'}{R}\nabla\Phi\cdot x\lr{x}^{-\alpha}[\psi_R^*]^{\frac{1}{p}} 
+
\frac{C_3}{R}\lr{x}^{-\alpha}\Phi[\psi_R^*]^{\frac{1}{p}} 
+
\frac{C_1}{R}\lr{x}^{-\alpha}\Phi[\psi_R^*]^{\frac{1}{p}} 
\\
&\leq 
\left(
\frac{C_2}{R^2}
+
\frac{4p'\gamma+C_1+C_3}{R}\lr{x}^{-\alpha}
\right)\Phi[\psi_R^*]^{\frac{1}{p}}. 
\end{align*}
Therefore choosing the test function $\psi(\cdot,t)=\Phi(\cdot)\psi_R(\cdot,t)\in D(A)$ 
implies that 
\begin{align*}
&c_0\ep
+\iint_{P(R)}
   |u(t)|^p\Phi\psi_R(t)
\,dx
\,dt
\\
&\leq 
\iint_{P(R)}
   u(t)\Big(\pa_t^2(\Phi\psi_R(t))-\Delta (\Phi\psi_R(t))-\pa_t(a\Phi\psi_R(t))\Big)
\,dx
\,dt
\\
&\leq 
C_4
\iint_{P(R)}
u\left(
\frac{1}{R^2}
+
\frac{1}{R}\lr{x}^{-\alpha}
\right)\Phi[\psi_R^*(t)]^{\frac{1}{p}}
\,dx
\,dt
\\
&\leq 
\frac{C_4}{R}
\left(
\iint_{P(R)}
\left(
\frac{1}{R}
+
\lr{x}^{-\alpha}
\right)^{p'}\Phi
\,dx
\,dt
\right)^{\frac{1}{p'}}
\left(
\iint_{P(R)}
|u(t)|^p
\Phi\psi_R^*(t)
\,dx
\,dt
\right)^{\frac{1}{p}}. 
\end{align*}
Noting that for $\beta=0,\alpha$, 
\begin{align*}
\iint_{P(R)}
\lr{x}^{-\beta p'}
\Phi
\,dx\,dt
&\leq 
\int_{0}^{R}
\int_{B(0,R^{\frac{1}{2-\alpha}})}
\lr{x}^{-\beta p'}
\Phi
\,dx\,dt
\\
&=
\int_{\Sigma}\varphi_{\Sigma}(\omega)\,d\omega
\int_{0}^{R}
\,dt
\int_0^{R^{\frac{1}{2-\alpha}}}
(1+r^2)^{-\beta p'/2}
r^{N+\gamma-1}
\,dr
\\
&\leq 
\begin{cases}
C R^{1+\frac{N+\gamma -\beta p'}{2-\alpha}} 
& \text{if}\ p>1+\frac{\beta}{N+\gamma-\beta},
\\
C R\log R
& \text{if}\ p=1+\frac{\beta}{N+\gamma-\beta},
\\
C R
& \text{if}\ p<1+\frac{\beta}{N+\gamma-\beta},
\end{cases}
\end{align*}
we deduce
\begin{align*}
&c_0\ep
+\iint_{P(R)}
  |u(t)|^p\Phi\psi_R(t)
\,dx\,dt
\leq 
C_5
q(R)^{1/p'}
\left(
\iint_{P(R)}
  |u(t)|^p\Phi\psi_R^*(t)
\,dx\,dt
\right)^{\frac{1}{p}}
\end{align*}
with 
\[
q(R)=
\begin{cases}
R^{-\frac{2}{2-\alpha}(\frac{1}{p-1}-\frac{N+\gamma-\alpha}{2})}
& \text{if}\ p>1+\frac{\alpha}{N+\gamma-\alpha},
\\
R^{-\frac{1}{p-1}}(\log R)
& \text{if}\ p=1+\frac{\alpha}{N+\gamma-\alpha},
\\
R^{-\frac{1}{p-1}}
& \text{if}\ 1<p<1+\frac{\alpha}{N+\gamma-\alpha}.
\end{cases}
\]
Therefore applying Lemma \ref{key} with $w=|u|^p\Phi$, we have 
\begin{align*}
T_{\max}
\leq 
\begin{cases}
\exp\Big(C\ep^{-(p-1)}\Big)
&\text{if}\ p=1+\frac{2}{N+\gamma-\alpha},
\\
C\ep^{-\frac{2-\alpha}{2}\left(\frac{1}{p-1}-\frac{N+\gamma-\alpha}{2}\right)^{-1}}
&\text{if}\ 1+\frac{\alpha}{N+\gamma-\alpha}<p<1+\frac{2}{N+\gamma-\alpha},
\\
C_\delta\ep^{-(p-1)-\delta}\ (\forall\delta>0)
&\text{if}\ p=1+\frac{2}{N+\gamma-\alpha},
\\
C\ep^{-(p-1)}
&\text{if}\ 1<p<1+\frac{2}{N+\gamma-\alpha}.
\end{cases}
\end{align*}
The part of proof for the solution in Proposition \ref{wellposed2} is complete. 

Finally, we only give a comment for the proof of 
upper bound for solution in Proposition \ref{prop:singular}. 
If we consider the case $a(x)=V_0|x|^{-1}$ and $\mathcal{C}_{\Sigma}=\R^N$, 
that is, $\gamma=0$ and $\alpha=1$, 
then we can deduce the same upper bound for the lifespan of $u$ as above 
only when $\frac{N}{N-1}<p\leq \frac{N+1}{N-1}$. 
The crucial point for that restriction is due to the integrability of 
\[
\iint_{P(R)}
|x|^{-p'}
\,dx\,dt.
\]
The proof is complete. 
\qed

\subsection*{Acknowedgements}
This work is partially supported 
by Grant-in-Aid for Young Scientists Research (B) 
No.16K17619 
and 
by Grant-in-Aid for Young Scientists Research (B) 
No.15K17571.


\end{document}